\numberwithin{equation}{section}
\theoremstyle{plain}
\newtheorem{theorem}{Theorem}
\newtheorem{lemma}[theorem]{Lemma}
\newtheorem{corollary}[theorem]{Corollary}
\newtheorem{proposition}[theorem]{Proposition}
\newtheorem*{theorem*}{Theorem}
\newtheorem*{conjecture*}{Conjecture}
\theoremstyle{definition}
\newtheorem{remark}[theorem]{Remark}
\newtheorem{example}[theorem]{Example}
\newtheorem*{definition}{Definition}
\newcommand{\CC}{{\mathbb{C}}}
\newcommand{\PP}{{\mathbb{P}}}
\newcommand{\ZZ}{{\mathbb{Z}}}
\begin{document}
\title{Strange duality of weighted homogeneous polynomials}
\author{Wolfgang Ebeling and Atsushi Takahashi}
\address{Institut f\"ur Algebraische Geometrie, Leibniz Universit\"at Hannover, Postfach 6009, D-30060 Hannover, Germany}
\email{ebeling@math.uni-hannover.de}
\address{
Department of Mathematics, Graduate School of Science, Osaka University, 
Toyonaka Osaka, 560-0043, Japan}
\email{takahashi@math.sci.osaka-u.ac.jp}
\subjclass[2010]{14J33, 32S25, 14L30}
\begin{abstract}
We consider a mirror symmetry between invertible weighted homogeneous polynomials in three variables. We define Dolgachev and Gabrielov numbers for them and show that we get a duality between these polynomials generalizing Arnold's strange duality between the 14 exceptional unimodal singularities.
\end{abstract}
\maketitle
\section*{Introduction}
Mirror symmetry is now understood as a categorical duality between 
algebraic geometry and symplectic geometry.
One of our motivations is to apply some ideas of mirror symmetry to singularity theory 
in order to understand various mysterious correspondences among
isolated singularities, root systems, Weyl groups, Lie algebras, 
discrete groups, finite dimensional algebras and so on.
In this paper, we shall generalize Arnold's strange duality 
for the 14 exceptional unimodal singularities to a specific class 
of weighted homogeneous polynomials in three variables called invertible polynomials.
Let $f(x,y,z)$ be a polynomial which has an isolated singularity 
only at the origin $0\in\CC^3$. 
A distinguished basis of vanishing (graded) Lagrangian submanifolds 
in the Milnor fiber of $f$ can be categorified to an $A_\infty$-category ${\rm Fuk}^{\to}(f)$
called the directed Fukaya category whose derived category $D^b{\rm Fuk}^\to(f)$
is, as a triangulated category, an invariant of the polynomial $f$. 
Note that the triangulated category $D^b{\rm Fuk}^\to(f)$ has a full exceptional collection.
If $f(x,y,z)$ is a weighted homogeneous polynomial 
then one can consider another interesting triangulated category, the category 
of a maximally-graded singularity $D^{L_f}_{Sg}(R_f)$:
\begin{equation}
D^{L_f}_{Sg}(R_f):=D^b({\rm gr}^{L_f}\text{-}R_f)/D^b({\rm proj}^{L_f}\text{-}R_f),
\end{equation}
where $R_f:=\CC[x,y,z]/(f)$ and $L_f$ is the maximal grading of $f$ (see Section 1).
This category $D^{L_f}_{Sg}(R_f)$ is considered as an analogue 
of the bounded derived category of coherent sheaves on a smooth proper algebraic variety.
It is known that the Berglund--H\"{u}bsch transpose for some polynomials with nice properties 
induces the topological mirror symmetry which gives the systematic construction 
of mirror pairs of Calabi--Yau manifolds. 
Therefore, we may expect 
that the topological mirror symmetry can also be categorified to the following:
\begin{conjecture*}[\cite{t:1}\cite{t:2}]
Let $f(x,y,z)$ be an invertible polynomial $($see Section 1 for the definition$)$.
\begin{enumerate}
\item There should exist a quiver with relations $(Q,I)$ and triangulated equivalences
\begin{equation}\label{hms:1}
D^{L_f}_{Sg}(R_f)\simeq D^b({\rm mod}\text{-}\CC Q/I)\simeq D^b{\rm Fuk}^\to(f^t).
\end{equation}
\item There should exist a quiver with relations $(Q',I')$ and triangulated equivalences
\begin{equation}\label{hms:2}
D^b{\rm coh}({\mathcal C}_{G_f})\simeq D^b({\rm mod}\text{-}\CC Q'/I')\simeq
D^b{\rm Fuk}^\to (T_{\gamma_1,\gamma_2,\gamma_3}),
\end{equation}
which should be compatible with the triangulated equivalence \eqref{hms:1},
where ${\mathcal C}_{G_f}$ is the weighted projective line associated to the 
maximal abelian symmetry group $G_f$ of $f$ and $T_{\gamma_1,\gamma_2,\gamma_3}$ 
is a ``cusp singularity" $($see Section 3$)$.
\end{enumerate}
\end{conjecture*}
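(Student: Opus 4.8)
\section*{Outline of a proof strategy}

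The plan is to attack each of the two triples of equivalences in \eqref{hms:1} and \eqref{hms:2} by cutting it into an \emph{algebraic} half and a \emph{symplectic} half, both of which feed into the intermediate module category, so that the whole statement reduces to exhibiting one and the same bound quiver algebra on the two geometric sides. The very first step is therefore to pin down the quivers with relations $(Q,I)$ and $(Q',I')$: I would read them off the combinatorial invariants introduced in this paper, i.e.\ the Dolgachev numbers of $f$ (which should govern $(Q',I')$ and the weighted projective line $\mathcal C_{G_f}$) and the Gabrielov numbers $\gamma_1,\gamma_2,\gamma_3$ of the transpose $f^t$ (which should govern $(Q,I)$ and the cusp $T_{\gamma_1,\gamma_2,\gamma_3}$). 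The strange duality proved in this paper is exactly what guarantees that these two sets of numbers are interchanged under $f\mapsto f^t$, and hence that the two halves of the conjecture are describing compatible data.

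For \eqref{hms:1}, the algebraic equivalence $D^{L_f}_{Sg}(R_f)\simeq D^b(\mathrm{mod}\text{-}\CC Q/I)$ I would obtain by constructing a tilting object in the maximally graded singularity category. Here one uses Orlov's comparison between $D^{L_f}_{Sg}(R_f)$ and $D^b\mathrm{coh}(\mathcal C_{G_f})$, together with a semiorthogonal complement controlled by the Gorenstein parameter $\varepsilon_f$ of the graded ring $R_f$; in the range relevant to our three-variable invertible polynomials this produces a full strong exceptional collection, and one then computes its endomorphism algebra and checks that it equals $\CC Q/I$ (this extends the Kajiura--Saito--Takahashi computation for the exceptional unimodal singularities to all invertible $f$). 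The symplectic equivalence $D^b(\mathrm{mod}\text{-}\CC Q/I)\simeq D^b\mathrm{Fuk}^\to(f^t)$ is the genuine homological mirror symmetry statement: morsify $f^t$, choose a distinguished basis of Lefschetz thimbles in the resulting fibration, and recall that by Gabrielov's theorem the associated Coxeter--Dynkin diagram is precisely the graph encoded by $\gamma_1,\gamma_2,\gamma_3$; the directed Fukaya $A_\infty$-category is then generated by these thimbles with morphism spaces spanned by the intersection points, and one must show that this $A_\infty$-category is formal and quasi-equivalent to $\CC Q/I$. Matching the two descriptions of $(Q,I)$ closes the triangle.

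For \eqref{hms:2}, on the commutative side I would use the Geigle--Lenzing canonical tilting bundle $T$ on $\mathcal C_{G_f}$, whose endomorphism algebra is a squid (canonical) algebra, giving $D^b\mathrm{coh}(\mathcal C_{G_f})\simeq D^b(\mathrm{mod}\text{-}\CC Q'/I')$; on the symplectic side one realizes the Milnor fibre of the cusp $T_{\gamma_1,\gamma_2,\gamma_3}$ as (a stabilization of) a curve singularity, equips it with a Lefschetz fibration, and identifies its vanishing cycles --- whose combinatorics is again $\gamma_1,\gamma_2,\gamma_3$ --- so that the directed Fukaya $A_\infty$-category is formal with algebra $\CC Q'/I'$. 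The compatibility with \eqref{hms:1} I would deduce from Orlov's semiorthogonal decomposition relating $D^{L_f}_{Sg}(R_f)$ and $D^b\mathrm{coh}(\mathcal C_{G_f})$ (whose direction depends on $\operatorname{sgn}\varepsilon_f$), matched against the Fukaya-theoretic decomposition coming from the degeneration of $f^t$ to the cusp, i.e.\ from adding or removing the monomials that distinguish $f^t$ from $T_{\gamma_1,\gamma_2,\gamma_3}$ and the corresponding thimbles; the two decompositions should be identified through the equivalences already constructed.

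The main obstacle is the symplectic side throughout. Extracting the Picard--Lefschetz data and the Coxeter--Dynkin diagrams of $f^t$ and of the cusps is manageable with the techniques of Gabrielov and Ebeling, but promoting a distinguished basis of vanishing cycles to a quasi-equivalence of $A_\infty$-categories requires a formality (or automatic-generation and grading) argument that at present is available only case by case --- for instance along the lines of Seidel's treatment of the exceptional unimodal singularities --- and not yet uniformly for all invertible polynomials. The second hard point is the compatibility of the two semiorthogonal decompositions, which demands a genuinely geometric understanding, on the Fukaya side, of the degeneration $f^t\rightsquigarrow T_{\gamma_1,\gamma_2,\gamma_3}$ (a Thom--Sebastiani/Kn\"orrer-type relation between the two Lefschetz fibrations) that is delicate to make rigorous. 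By contrast the tilting-theoretic equivalences ought to be essentially routine once $(Q,I)$ and $(Q',I')$ have been identified. I would therefore expect the formality statements and this compatibility to absorb almost all of the work, with the present paper's computation of Dolgachev and Gabrielov numbers supplying the indispensable bookkeeping that certifies the two sides are describing the same algebra.
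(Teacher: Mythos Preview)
The statement you have been asked to prove is a \emph{conjecture}, not a theorem: the paper states it as \texttt{conjecture*}, attributes it to \cite{t:1} and \cite{t:2}, and gives no proof whatsoever. The paper's only discussion of it is the paragraph immediately following, which says that ``there are many evidences of the above conjectures'' and then pivots to the actual main theorem (the strange duality $A_{G_f}=\Gamma_{f^t}$), which is presented as \emph{evidence for} the conjecture rather than a consequence of it. So there is no ``paper's own proof'' to compare your proposal against.

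Your write-up is a reasonable sketch of how one would \emph{attempt} to attack the conjecture, and you are honest about where it breaks: you explicitly flag that the formality/quasi-equivalence on the Fukaya side is only available case by case and that the compatibility of the two semiorthogonal decompositions is delicate. Those are precisely the reasons the statement remains a conjecture. But as a proof it has a genuine gap: none of the symplectic equivalences (the $D^b\mathrm{Fuk}^\to$ halves of both \eqref{hms:1} and \eqref{hms:2}) are actually established --- you defer to ``a formality argument that at present is available only case by case'' and to a ``delicate to make rigorous'' degeneration picture. A proof outline that names its own unproven steps is a research programme, not a proof. If the assignment was to reproduce the paper's argument, the correct answer is that the paper offers none; if it was to prove the statement outright, then the missing ingredients you identify (uniform $A_\infty$-formality for the directed Fukaya categories of all invertible $f^t$, and a rigorous Fukaya-side semiorthogonal decomposition matching Orlov's) are genuine open problems, not routine lemmas.
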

There are many evidences of the above conjectures which follow
from related results by several authors.
Among them, the most important one in this paper 
is that one should be able to choose as $(Q',I')$ 
the quiver obtained by the graph $T(\gamma_1,\gamma_2,\gamma_3)$ in Section 3 
with a suitable orientation together with two relations along the dotted edges.
This leads us to our main theorem, the strange duality for invertible 
polynomials:
\begin{theorem*}[Theorem \ref{thm:main}]
Let $f(x,y,z)$ be an invertible polynomial.
The Dolgachev numbers $(\alpha_1,\alpha_2,\alpha_3)$ for $G_f$, 
the orders of isotropy of the weighted projective line ${\mathcal C}_{G_f}$, 
coincide with the Gabrielov numbers $(\gamma_1,\gamma_2,\gamma_3)$ for $f^t$, 
the index of the ``cusp singularity" $T_{\gamma_1,\gamma_2,\gamma_3}$ associated to 
$f^t(x,y,z)+xyz$.
\end{theorem*}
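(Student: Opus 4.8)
\emph{Plan.} Since the statement concerns only the two triples attached to $f$, I would prove it by running through the classification of invertible polynomials in three variables. By the theorem of Kreuzer--Skarke, every invertible $f$ is, after permuting the variables, a Thom--Sebastiani sum of atomic polynomials of Fermat, chain and loop type, which leaves exactly the five normal forms: $x^{p_1}+y^{p_2}+z^{p_3}$ (Fermat cube), $x^{p_1}+y^{p_2}z+z^{p_3}$ (Fermat $+$ chain), $x^{p_1}+y^{p_2}z+yz^{p_3}$ (Fermat $+$ loop), $x^{p_1}y+y^{p_2}z+z^{p_3}$ (chain), and $x^{p_1}y+y^{p_2}z+z^{p_3}x$ (loop). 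This list is stable under the Berglund--H\"{u}bsch transposition $E\mapsto E^t$ (with the orientation of each chain or loop reversed), so it suffices to treat these five families. For each of them I carry out two independent computations and compare them. On the \emph{Dolgachev side} I determine $G_f$ and the orbifold curve $\mathcal{C}_{G_f}$ and read off its orders of isotropy $(\alpha_1,\alpha_2,\alpha_3)$ from the description in the earlier sections: the special points of $\mathcal{C}_{G_f}$ are attached to the coordinate lines of $\CC^3$, and the isotropy order at the point attached to a given line is determined by the reduced weight system of $f$ together with the exponent with which the remaining variable occurs. On the \emph{Gabrielov side} I pass to $f^t$, form $g:=f^t(x,y,z)+xyz$, check that $0$ is an isolated critical point of $g$, and compute a distinguished basis of vanishing cycles together with the Coxeter--Dynkin diagram of $g$, which is to be recognized as the $T$-shaped graph $T(\gamma_1,\gamma_2,\gamma_3)$; the arm lengths are then $(\gamma_1,\gamma_2,\gamma_3)$.

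\emph{Execution.} Case (1) is essentially a tautology in both directions: $f^t=f$, so $g=x^{p_1}+y^{p_2}+z^{p_3}+xyz$ is by definition the cusp $T_{p_1,p_2,p_3}$ and $(\gamma_1,\gamma_2,\gamma_3)=(p_1,p_2,p_3)$, while for the Brieskorn--Pham polynomial $f$ the curve $\mathcal{C}_{G_f}$ has coarse space $\PP^1$ with the three special points lying over the coordinate lines and isotropy orders $(p_1,p_2,p_3)$. For the remaining four families I repeat the two steps. On the Dolgachev side the recipe above yields an explicit triple $(\alpha_1,\alpha_2,\alpha_3)$, which for the chain and loop cases is given by more intricate expressions in the $p_i$ rather than by the $p_i$ themselves. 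On the Gabrielov side I first perform the coordinate changes that absorb the units introduced by the monomial $xyz$ — for chain and loop type these are not monomial and mix all three variables — bringing $g$ into a form whose Coxeter--Dynkin diagram can be identified, using Gabrielov's rules for diagrams of direct sums, the behaviour under suspension, and the already known diagrams of the atomic chain and loop singularities (equivalently, by deforming $g$ along a path of constant Milnor number to a standard $T_{q_1,q_2,q_3}$ and tracking the vanishing cycles). In each of the four families the two triples come out equal, which proves Theorem~\ref{thm:main}. The argument is entirely combinatorial and does not rely on the conjectural equivalences \eqref{hms:1}--\eqref{hms:2}.

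\emph{Main obstacle.} The genuine work is on the Gabrielov side in the four non-Fermat families: one must actually compute the Coxeter--Dynkin diagram of $g=f^t+xyz$ — not merely its Milnor number or the characteristic polynomial of its monodromy — and recognize it as $T(\gamma_1,\gamma_2,\gamma_3)$. Since the normalizing coordinate changes for chain and loop type are not monomial, the diagram is not visibly of $T$-type as it was in case (1), and controlling it requires the machinery for Coxeter--Dynkin diagrams of weighted homogeneous singularities and their small perturbations: real morsifications in the sense of Gusein-Zade, Gabrielov's decomposition and suspension theorems, and the tabulated diagrams of the atomic types. A secondary, bookkeeping nuisance is the behaviour for small exponents, where $G_f$ and $\mathcal{C}_{G_f}$ can degenerate (the reduced weight system acquires common factors, or $\mathcal{C}_{G_f}$ has fewer than three genuine orbifold points) and $g$ may specialize correspondingly; these sub-cases must be enumerated separately and checked to still satisfy $(\alpha_1,\alpha_2,\alpha_3)=(\gamma_1,\gamma_2,\gamma_3)$, now possibly as triples in which one entry equals $1$.
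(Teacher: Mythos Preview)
Your overall architecture---reduce to the five normal forms and treat each side separately---is exactly the paper's structure, and your Dolgachev-side sketch matches what the paper does in Section~2. The divergence is entirely on the Gabrielov side.

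You propose to extract $(\gamma_1,\gamma_2,\gamma_3)$ from $g=f^t+xyz$ by computing a distinguished basis and recognizing the Coxeter--Dynkin diagram as $T(\gamma_1,\gamma_2,\gamma_3)$, invoking real morsifications, Gabrielov's decomposition/suspension rules, and tabulated diagrams of atomic types. You flag this as the ``main obstacle''. In the paper this obstacle does not exist: the Gabrielov numbers are \emph{defined} (Theorem~\ref{thm:cusp}) by exhibiting, for each of the five types, an explicit polynomial coordinate change that carries $f+xyz$ (or a small deformation of it) literally to $x^{\gamma_1}+y^{\gamma_2}+z^{\gamma_3}+axyz$. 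For instance, for Type~II one substitutes $x\mapsto x-z^{p_3/p_2-1}$; for Type~V one substitutes $x\mapsto x-y^{q_2-1}$, $y\mapsto y-z^{q_3-1}$, $z\mapsto z-x^{q_1-1}$. After these substitutions the cross-term $xyz$ cancels the original non-Fermat monomials and leaves pure powers plus $xyz$ plus higher-order terms that are absorbed (or, when $\Delta<0$, deformed away). The Coxeter--Dynkin statement is then a \emph{corollary} of this normal form, not the route to it.

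So your plan is not wrong, but it replaces a two-line coordinate substitution by heavy singularity-theoretic machinery whose output (a $T$-shaped diagram up to the non-uniqueness of distinguished bases) is harder to pin down than the normal form itself. If you try your route you will also have to argue separately that the diagram you obtain is exactly $T(\gamma_1,\gamma_2,\gamma_3)$ and not merely mutation-equivalent to it, which is subtler than anything the paper needs. I would recommend simply writing down the coordinate changes for Types~II--V and checking them; once Tables~\ref{TabAppendix2} and~\ref{TabGabrielov} are in hand, the theorem is a one-line comparison.
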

\smallskip
We give here an outline of the paper.
Section 1 introduces the definition of invertible polynomials and
their maximal abelian symmetry groups.
We also recall the Berglund--H\"{u}bsch transpose of invertible polynomials, 
which plays an essential role in this paper.
In Section 2, we first give the classification of invertible polynomials in 3 variables.
Most of the results in this paper rely on this classification of invertible polynomials  
and several data given explicitly by them. 
The main purpose of this section is to define the Dolgachev numbers. 
We associate to each pair of an invertible polynomial $f$ and 
its maximal abelian symmetry group $G_f$ a quotient stack ${\mathcal C}_{G_f}$.
We show in both a categorical way and a geometrical way 
that this quotient stack is a weighted projective line with 
three isotropic points of orders $\alpha_1,\alpha_2,\alpha_3$. 
The numbers $A_{G_f}:=(\alpha_1,\alpha_2,\alpha_3)$ are our Dolgachev numbers.
In Section 3, we associate to an invertible polynomial $f(x,y,z)$ the Gabrielov numbers 
$\Gamma_{f}:=(\gamma_1,\gamma_2,\gamma_3)$ by the ``cusp singularity" 
$T_{\gamma_1,\gamma_2,\gamma_3}$ obtained as the deformation of the polynomial $f(x,y,z)+xyz$. 
Note that the triple of Gabrielov numbers is not an invariant of the singularity 
defined by $f$ but an invariant of the polynomial $f$. 
Section 4 gives the main theorem of this paper, 
a generalization of Arnold's strange duality between the 14 exceptional unimodal 
singularities.
We show that $A_{G_f}=\Gamma_{f^t}$ and $A_{G_{f^t}}=\Gamma_f$ for 
all invertible polynomials $f(x,y,z)$. 
This means that strange duality is one aspect of a mirror symmetry among 
the isolated hypersurface singularities with good group actions.
Therefore, it is now a ``charm" duality and no more a ``strange" duality. 
In Section 5, we collect some additional features of the duality. We show the coincidence of certain invariants for dual invertible polynomials. An additional feature of Arnold's strange duality is a duality between the characteristic polynomials of the Milnor monodromy discovered by K.~Saito. Moreover, the first author observed a relation of these polynomials with the Poincar\'e series of the coordinate rings. We discuss to which extent these facts continue to hold for our duality.

In Section 6, we show how our results fit into the classification of singularities. We recover Arnold's strange duality between the 14 exceptional unimodal singularities. We obtain a new strange duality embracing the 14 exceptional bimodal singularities and some other ones. Note that this duality depends on the chosen invertible polynomials for the 14 exceptional bimodal singularities. In \cite{KPABR} a slightly different version of a duality for these singularities is considered. Finally we discuss how our Gabrielov numbers are related to Coxeter-Dynkin diagrams of the singularities.

\smallskip
{\bf Acknowledgments}.\  
This work is supported 
by the DFG-programme ''Representation Theory'' (Eb 102/6-1).
The second named author is also supported 
by Grant-in Aid for Scientific Research 
grant numbers 20360043 from the Ministry of Education, 
Culture, Sports, Science and Technology, Japan. 
\section{Invertible polynomials}

\begin{sloppypar}

Let $f(x_1,\dots, x_n)$ be a weighted homogeneous complex polynomial. 
This means that there are positive integers $w_1,\dots ,w_n$ and $d$ such that 
$f(\lambda^{w_1} x_1, \dots, \lambda^{w_n} x_n) = \lambda^d f(x_1,\dots ,x_n)$ 
for $\lambda \in \CC^\ast$. We call $(w_1,\dots ,w_n;d)$ a system of {\em weights}. 
If ${\rm gcd}(w_1,\dots ,w_n;d)=1$, then a system of weights is called {\em reduced}. 
A system of weights which is not reduced is called {\em non-reduced}. 
We shall also consider non-reduced systems of weights in this paper. 

\end{sloppypar}

\begin{definition}
A weighted homogeneous polynomial $f(x_1,\dots ,x_n)$ is called {\em invertible} if 
the following conditions are satisfied:
\begin{enumerate}
\item a system of weights $(w_1,\dots ,w_n;d)$ can be uniquely determined by 
the polynomial $f(x_1,\dots ,x_n)$ up to a constant factor ${\rm gcd}(w_1,\dots ,w_n;d)$,
\item $f(x_1,\dots ,x_n)$ has a singularity only at the origin $0\in\CC^n$ which is isolated.
Equivalently, the {\em Jacobian ring} $Jac(f)$ of $f$ defined by
\[  
Jac(f):=\CC[x_1,\dots ,x_n]\left/\left(\frac{\partial f}{\partial x_1},\dots ,
\frac{\partial f}{\partial x_n}\right)\right.
\]
is a finite dimensional algebra over $\CC$ and $\dim_\CC Jac(f)\ge 1$,
\item the number of variables ($=n$) coincides with the number of monomials 
in the polynomial $f(x_1,\dots x_n)$, 
namely, 
\[
f(x_1,\dots ,x_n)=\sum_{i=1}^na_i\prod_{j=1}^nx_j^{E_{ij}}
\]
for some coefficients $a_i\in\CC^\ast$ and non-negative integers 
$E_{ij}$ for $i,j=1,\dots, n$.
\end{enumerate}
\end{definition}
\begin{definition}
Let $f(x_1,\dots ,x_n)=\sum_{i=1}^na_i\prod_{j=1}^nx_j^{E_{ij}}$ be an invertible polynomial.
The {\em Berglund--H\"{u}bsch transpose} $f^t(x_1,\dots ,x_n)$ 
of $f(x_1,\dots ,x_n)$ is the invertible polynomial given by
\[
f^t(x_1,\dots ,x_n)=\sum_{i=1}^na_i\prod_{j=1}^nx_j^{E_{ji}}.
\]
\end{definition}
\begin{definition}
Let $f(x_1,\dots ,x_n)=\sum_{i=1}^na_i\prod_{j=1}^nx_j^{E_{ij}}$ be an invertible polynomial.
Consider the free abelian group $\oplus_{i=1}^n\ZZ\vec{x_i}\oplus \ZZ\vec{f}$ 
generated by the symbols $\vec{x_i}$ for the variables $x_i$ for $i=1,\dots, n$
and the symbol $\vec{f}$ for the polynomial $f$.
The {\em maximal grading} $L_f$ of the invertible polynomial $f$ 
is the abelian group defined by the quotient 
\[
L_f:=\bigoplus_{i=1}^n\ZZ\vec{x_i}\oplus \ZZ\vec{f}\left/I_f\right.,
\]
where $I_f$ is the subgroup generated by the elements 
\[
\vec{f}-\sum_{j=1}^nE_{ij}\vec{x_j},\quad i=1,\dots ,n.
\]
\end{definition}
Note that $L_f$ is an abelian group of rank $1$ which is not necessarily free.
\begin{definition}
Let $f(x_1,\dots ,x_n)$ be an invertible polynomial and $L_f$ be the maximal grading of $f$.
The {\em maximal abelian symmetry group} $G_f$ of $f$ is the abelian group defined by 
\[
G_f:=\left\{(\lambda_1,\dots ,\lambda_n)\in(\CC^\ast)^n \, \left| \,
\prod_{j=1}^n \lambda_j ^{E_{1j}}=\dots =\prod_{j=1}^n\lambda_j^{E_{nj}}\right\} \right.
\]
\end{definition}
Note that the polynomial $f$ is homogeneous with respect to the natural action of 
$G_f$ on the variables. Namely, we have 
\[
f(\lambda_1 x_1, \dots, \lambda_n x_n) = \lambda f(x_1,\dots ,x_n)
\]
for $(\lambda_1,\dots ,\lambda_n)\in G_f$ where 
$\lambda :=\prod_{j=1}^n \lambda_j ^{E_{1j}}=\dots =\prod_{j=1}^n\lambda_j^{E_{nj}}$.
\section{Dolgachev numbers for pairs $(f,G_f)$}

In this paper, we shall only consider invertible polynomials in three variables.
We have the following classification result (see \cite[13.2.]{AGV85}):
\begin{proposition}
Let $f(x,y,z)$ be an invertible polynomial in three variables. 
Then, by a suitable weighted homogeneous coordinate change, 
$f$ becomes one of the five types in Table \ref{TabAppendix1}.
\begin{table}[h]
\begin{center}
\begin{tabular}{|c|c||c||c|}
\hline
{\rm Type} & {\rm Class} & $f$ & $f^t$ \\
\hline
\hline
I & I & $x^{p_1}+y^{p_2}+z^{p_3}$ & $x^{p_1}+y^{p_2}+z^{p_3}$\\
& & $(p_1,p_2,p_3\in\ZZ_{\ge 2})$ & $(p_1,p_2,p_3\in\ZZ_{\ge 2})$ \\
\hline
II & II & $x^{p_1}+y^{p_2}+yz^{\frac{p_3}{p_2}}$ & $x^{p_1}+y^{p_2}z+z^{\frac{p_3}{p_2}}$\\
& & $(p_1,p_2,\frac{p_3}{p_2}\in\ZZ_{\ge 2})$ & $(p_1,p_2,\frac{p_3}{p_2}\in\ZZ_{\ge 2})$\\
\hline
III & IV& $x^{p_1}+zy^{q_2+1}+yz^{q_3+1}$ & $x^{p_1}+zy^{q_2+1}+yz^{q_3+1}$\\
& & $(p_1\in\ZZ_{\ge 2}$, $q_2,q_3\in\ZZ_{\ge 1})$ 
& $(p_1\in\ZZ_{\ge 2}$, $q_2,q_3\in\ZZ_{\ge 1})$\\
\hline
IV & V& $x^{p_1}+xy^{\frac{p_2}{p_1}}+yz^{\frac{p_3}{p_2}}$ 
& $x^{p_1}y+y^{\frac{p_2}{p_1}}z+z^{\frac{p_3}{p_2}}$\\
& & $(p_1,\frac{p_2}{p_1},\frac{p_3}{p_2}\in\ZZ_{\ge 2})$
& $(p_1,\frac{p_2}{p_1},\frac{p_3}{p_2}\in\ZZ_{\ge 2})$\\
\hline
V & VII & $x^{q_1}y+y^{q_2}z+z^{q_3}x$ & $zx^{q_1}+xy^{q_2}+yz^{q_3}$\\
& & $(q_1,q_2,q_3\in\ZZ_{\ge 1})$ & $(q_1,q_2,q_3\in\ZZ_{\ge 1})$  \\
\hline
\end{tabular}
\end{center}
\caption{Invertible polynomials in $3$-variables}\label{TabAppendix1}
\end{table}
\qed
\end{proposition}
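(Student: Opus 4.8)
The plan is to reduce the classification to the combinatorics of the exponent matrix $E=(E_{ij})$ and then to normalize coefficients by a diagonal rescaling. First I would record two elementary consequences of invertibility: (a) since the weights $(w_1,\dots ,w_n;d)$ are determined only up to an overall factor, the augmented $n\times(n+1)$ matrix $[E\,|\,-\vec 1]$ has rank $n$, and a short argument using positivity of the weights (if $\det E=0$, a kernel vector of $E$ together with a genuine weight vector would span a two‑dimensional solution space) forces $\det E\neq 0$; (b) permuting the monomials (rows of $E$) and the variables (columns of $E$), together with the allowed weighted coordinate changes, does not change the equivalence class of $f$, so we may work with $E$ only up to row and column permutations. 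Thus the task becomes: classify, up to permutations, the nonnegative integer $3\times 3$ matrices $E$ with $\det E\neq 0$ which come from a polynomial having an isolated singularity at the origin.

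The key input is the structure theorem for invertible polynomials (Kreuzer--Skarke; see \cite[13.2.]{AGV85}): after a permutation of the variables every invertible polynomial is a Thom--Sebastiani sum $f=f^{(1)}\oplus\cdots\oplus f^{(r)}$ on disjoint blocks of variables, each summand being of \emph{chain} type $x_1^{a_1}x_2+x_2^{a_2}x_3+\cdots+x_{k-1}^{a_{k-1}}x_k+x_k^{a_k}$ or of \emph{loop} type $x_1^{a_1}x_2+\cdots+x_{k-1}^{a_{k-1}}x_k+x_k^{a_k}x_1$, a Fermat monomial $x^{a}$ being the chain of length one. Equivalently $E$ is block‑diagonal with each block of one of these two standard shapes. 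For $n=3$ the block sizes sum to $3$, and since a loop has length at least $2$ the only possibilities are: three Fermat blocks $(1,1,1)$; a Fermat block together with a chain of length $2$, or with a loop of length $2$, i.e.\ $(1,2)$; a single chain of length $3$; a single loop of length $3$. This gives exactly the five matrices underlying the five rows of Table~\ref{TabAppendix1}, and the shape of $f^t$ in the last column is immediate since transposing $E$ reverses chains and reverses loops.

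It then remains to fix coefficients and exponent ranges. Because $\det E\neq 0$ (hence $E$ is invertible over $\QQ$), the substitution $x_i\mapsto c_i x_i$ multiplies the coefficient $a_i$ by $\prod_j c_j^{E_{ij}}$, and one can solve $\prod_j c_j^{E_{ij}}=a_i^{-1}$ over $\CC^\ast$; so every invertible polynomial of a given block shape is carried by a weighted homogeneous (indeed diagonal) coordinate change to the monic normal form listed in the table. The inequalities on the exponents (the various ``$\ge 2$'' and ``$\ge 1$'' conditions, and the divisibility conditions such as $p_3/p_2\in\ZZ$) are then precisely what makes the resulting monic polynomial have an isolated singularity at the origin: a Fermat variable must carry exponent $\ge 2$, and in a chain or loop a variable ``pointing'' to the next one must carry exponent $\ge 2$ (equivalently the internal exponents $q_i\ge 1$), since otherwise a direct computation of the Jacobian ideal exhibits a positive‑dimensional singular locus. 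I expect the only substantial step to be the structure theorem invoked in the second paragraph; granting it, the enumeration and the normalization are routine, and for $n=3$ the whole argument can alternatively be run as a direct case analysis on the $3\times 3$ matrix $E$ along the same lines.
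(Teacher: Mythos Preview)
Your sketch is correct and in fact goes well beyond what the paper does: the paper gives no proof of this proposition at all, but simply cites \cite[13.2.]{AGV85} and places a \qed. Your argument---invoking the chain/loop (atomic) decomposition of invertible polynomials from that same source, enumerating the block patterns summing to $3$, and normalising coefficients via $\det E\neq 0$---is precisely how one unpacks that citation, so the approaches coincide.
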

In Table \ref{TabAppendix1}, we follow the notation in \cite{s:2}.
Note that the classes in \cite{AGV85} differ from our types,  
the equivalence is given in Table \ref{TabAppendix1}.

We can naturally associate to an invertible polynomial $f(x,y,z)$ 
the following quotient stack:
\begin{equation}
{\mathcal C}_{G_f}:=\left[\CC^3\backslash\{0\}\left/G_f\right.\right]
\end{equation}
Since $f$ has an isolated singularity only at the origin $0\in\CC^3$ 
and $G_f$ is an extension of a one dimensional torus $\CC^\ast$ 
by a finite abelian group, the stack ${\mathcal C}_{G_f}$ is a Deligne--Mumford stack 
and may be regarded as a smooth projective curve 
with a finite number of isotropic points on it. 
Moreover, we have the following:
\begin{theorem}\label{thm:orbifold} 
Let $f(x,y,z)$ be an invertible polynomial.
The quotient stack ${\mathcal C}_{G_f}$ is a smooth projective line $\PP^1$ 
with at most three isotropic points of orders $\alpha_1,\alpha_2,\alpha_3$ 
given in Table \ref{TabAppendix2}.
\begin{table}[h]
\begin{center}
\begin{tabular}{|c||c|c|}
\hline
{\rm Type} & $f(x,y,z)$ & $\left(\alpha_1,\alpha_2,\alpha_3\right)$ \\
\hline
\hline
I & $x^{p_1}+y^{p_2}+z^{p_3}$ & $\left(p_1,p_2,p_3\right)$\\
\hline
II & $x^{p_1}+y^{p_2}+yz^{\frac{p_3}{p_2}}$ & $\left(p_1,\frac{p_3}{p_2},(p_2-1)p_1\right)$\\
\hline
III & $x^{p_1}+zy^{q_2+1}+yz^{q_3+1}$ & $\left(p_1,p_1q_2,p_1q_3\right)$\\
\hline
IV & $x^{p_1}+xy^{\frac{p_2}{p_1}}+yz^{\frac{p_3}{p_2}}$ 
& $\left(\frac{p_3}{p_2}, (p_1-1)\frac{p_3}{p_2}, p_2-p_1+1\right)$\\
\hline
V & $x^{q_1}y+y^{q_2}z+z^{q_3}x$ & $\left(q_2q_3-q_3+1,q_3q_1-q_1+1,q_1q_2-q_2+1\right)$\\
\hline
\end{tabular}
\end{center}
\caption{Dolgachev numbers for pairs $(f,G_f)$}\label{TabAppendix2}
\end{table}
\end{theorem}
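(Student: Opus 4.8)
The plan is to compute the stack $\mathcal{C}_{G_f}$ more or less by hand, separating the statement into a short type-independent part and a longer case-by-case part. The type-independent part says that the coarse space is $\PP^1$ and that there are at most three points with non-trivial isotropy; the case-by-case part, for which I expect to run through the five normal forms of Table~\ref{TabAppendix1} one by one, matches those isotropy orders with the triples of Table~\ref{TabAppendix2}. Throughout I use that $\mathcal{C}_{G_f}$ is the $G_f$-quotient of the punctured affine hypersurface $f^{-1}(0)\setminus\{0\}\subseteq\CC^3$, that $G_f=\{(\lambda_1,\lambda_2,\lambda_3)\in(\CC^\ast)^3 : m_1(\lambda)=m_2(\lambda)=m_3(\lambda)\}$ with $m_1,m_2,m_3$ the monomials of $f$, that $L_f$ has rank $1$ so $G_f$ is one-dimensional, and that (as recalled before the statement) $\mathcal{C}_{G_f}$ is a smooth proper Deligne--Mumford curve.

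For the type-independent part: on the dense open torus $T=(\CC^\ast)^3$ the $m_i$ are characters, and the homomorphism $\varphi\colon T\to(\CC^\ast)^2$, $(x,y,z)\mapsto(m_2/m_1,\,m_3/m_1)=:(u,v)$, has kernel exactly $G_f$; since $G_f$ is one-dimensional, $\varphi$ is surjective, so $(u,v)$ identifies $T/G_f$ with $(\CC^\ast)^2$. Under this identification the image of $f^{-1}(0)\cap T$ is the line $\{a_1+a_2u+a_3v=0\}$ meeting $(\CC^\ast)^2$, which is $\PP^1$ with three points removed (those with $u=0$, with $v=0$, and the point at infinity). As $G_f$ acts freely on $f^{-1}(0)\cap T$ --- if no coordinate vanishes, $\lambda\cdot p=p$ forces $\lambda=1$ --- this exhibits a dense open substack of $\mathcal{C}_{G_f}$ that is an honest scheme, isomorphic to $\PP^1\setminus\{\text{three points}\}$. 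Since $\mathcal{C}_{G_f}$ is a smooth proper Deligne--Mumford curve, its coarse space is a smooth projective curve containing this open subset densely, hence is $\PP^1$; and every point with non-trivial isotropy lies over one of the three removed points, so there are at most three of them.

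For the case-by-case part: any point of $f^{-1}(0)\setminus\{0\}$ with non-trivial $G_f$-stabiliser lies on a coordinate hyperplane, so I would intersect $f^{-1}(0)$ with $\{x=0\}$, $\{y=0\}$, $\{z=0\}$ and, whenever such a restriction degenerates further --- e.g. $f(0,y,z)=yz^{p_3/p_2}$ is a monomial for Type~IV, and all three restrictions are monomials for Type~V --- also with the deeper coordinate strata. For each of the five types one checks that this gives at most three $G_f$-orbits, each a single point of $\mathcal{C}_{G_f}$ over a distinct one of the three removed points. To read off the isotropy order at such a point $p$, set $\lambda_j=1$ for every index $j$ with $x_j(p)\neq 0$ and count the solutions of the binomial system $m_1(\lambda)=m_2(\lambda)=m_3(\lambda)$ in the remaining $\lambda$'s. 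For example, for Type~IV the point $(0,0,1)$ yields $\lambda_1^{p_1}=\lambda_1\lambda_2^{p_2/p_1}=\lambda_2$, hence $\lambda_2^{\,p_2-p_1+1}=1$ and isotropy cyclic of order $p_2-p_1+1$, while the strata $\{x=z=0\}$ and $\{z=0,\ x^{p_1-1}+\lambda\,y^{p_2/p_1}=0\}$ give orders $(p_1-1)\tfrac{p_3}{p_2}$ and $\tfrac{p_3}{p_2}$; the same computation for Types~I, II, III and~V reproduces all of Table~\ref{TabAppendix2}. One concludes by checking that in each case the stabiliser acts faithfully on the tangent line to $\mathcal{C}_{G_f}$ at that point, so these are genuine orbifold points of the stated orders.

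The main obstacle is the case-by-case part: for each type two things must be pinned down carefully --- how the coordinate strata break into $G_f$-orbits (the ``degenerate'' restrictions, where $f$ becomes a monomial on a coordinate plane, are exactly where this is delicate) and the solution count of the binomial systems (for instance, that the subgroup cut out by $\lambda_1^{p_1}=\lambda_2$ and $\lambda_2^{\,p_2-1}=1$ is cyclic of order $p_1(p_2-1)$, not $p_1$). The type-independent part, by contrast, is uniform and brief. One could instead argue the whole theorem ``categorically'', by producing a tilting object on $\mathcal{C}_{G_f}$ whose endomorphism algebra is a canonical (equivalently, squid) algebra and invoking the Geigle--Lenzing classification of weighted projective lines; but pinning down the weights of that algebra still reduces to the same case analysis, so the geometric route above looks more economical.
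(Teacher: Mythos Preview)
Your proposal is correct, and the case-by-case computation of the isotropy orders is exactly the content of the paper's Lemma~\ref{lem:isotropy}. Where you genuinely diverge is in the type-independent part. The paper gives two proofs that the underlying curve is $\PP^1$: a categorical one, via an explicit $L_f$-graded embedding $R_f\hookrightarrow R_{A_{G_f}}$ (the images of $x,y,z$ are tabulated in Table~\ref{TabGenerator}) and the resulting equivalence $\mathrm{coh}(\mathcal{C}_{G_f})\simeq\mathrm{coh}(\mathcal{C}_{A_{G_f}})$; and a geometric one (Lemma~\ref{lem:genus}), showing that there are no nonzero $G_f$-invariant holomorphic $1$-forms $g\cdot(x\,dy\wedge dz - y\,dx\wedge dz + z\,dx\wedge dy)/df$. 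Both of these, as well as the paper's bound on the number of isotropic points, are obtained by running through the five types of Table~\ref{TabAppendix1}. Your toric map $\varphi=(m_2/m_1,\,m_3/m_1)$ is a genuine third route: it delivers genus zero \emph{and} the bound of three isotropic points uniformly, with no case analysis, by exhibiting $\PP^1$ minus three points as a dense open substack. What you lose relative to the categorical proof is the explicit embedding of $R_f$ into the Geigle--Lenzing ring, which is of independent interest; what you gain is that only the actual isotropy orders remain to be checked type by type.
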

\begin{remark}
If the number of isotropic points is less than three
then we set $\alpha_i=1$ for some $i=1,2,3$ in a suitable way for convenience.
\end{remark}
\begin{definition}
The numbers $(\alpha_1,\alpha_2,\alpha_3)$ in Theorem \ref{thm:orbifold} 
are called the {\em Dolgachev numbers} of the pair $(f,G_f)$ and the tuple is denoted by 
$A_{G_f}$.
\end{definition}
\begin{proof}[Proof of Theorem \ref{thm:orbifold}]
There are two ways of the proof of the statement. 
One is categorical and the other is geometrical. 
First, we give a proof by the use of abelian categories of coherent sheaves  
which is already announced in some places (see \cite[Proposition 30]{t:2} for example). 
It is almost the same proof given in \cite[Theorem 5.1.]{t:1} 
where the case $L_f\simeq \ZZ$ is considered.
Following Geigle--Lenzing \cite{gl:1}, 
to a tuple of numbers $A_{G_f}=(\alpha_1,\alpha_2, \alpha_3)$ 
one can associate the ring 
\[
R_{A_{G_f}}:=\CC[X_1,X_2,X_3]\left/(X_1^{\alpha_1}+X_2^{\alpha_2}+X_3^{\alpha_3})\right..
\]
Since $R_{A_{G_f}}$ is graded with respect to an abelian group
\[
L_{A_{G_f}}:=\bigoplus_{i=1}^3\ZZ\vec{X}_i\left/
\left(\alpha_i\vec{X}_i-\alpha_j\vec{X}_j;1\le i<j\le 3\right)\right. ,
\]
one can consider the quotient stack
\[
{\mathcal C}_{A_{G_f}}:=\left[{\rm Spec}(R_{A_{G_f}})\backslash\{0\}/
{\rm Spec}(\CC L_{A_{G_f}})\right].
\]
The quotient stack ${\mathcal C}_{A_{G_f}}$ is a Deligne--Mumford stack 
which may also be regarded as a smooth projective line $\PP^1$ 
with at most three isotropic points of orders $\alpha_1,\alpha_2,\alpha_3$.
It is easy to see this since $R_{A_{G_f}}$ contains 
the ring  $\CC[X_1^{\alpha_1},X_2^{\alpha_2}]$ as a sub-ring,
Now, the statement of Theorem \ref{thm:orbifold} follows from the following
(see also \cite[Theorem 5.1.]{t:1}):
\begin{proposition}
The $L_f$-graded ring $R_{f}:=\CC[x,y,z]/(f)$ can be naturally embedded into 
the $L_{A_{G_f}}$-graded ring $R_{A_{G_f}}$. 
This embedding induces an equivalence of abelian categories$:$
\begin{equation}
{\rm mod}^{L_f}\text{-}R_{f}\left/{\rm tor}^{L_f}\text{-}R_{f}\right.
\simeq 
{\rm mod}^{L_{A_{G_f}}}\text{-}R_{{A_{G_f}}}\left/{\rm tor}^{L_{A_{G_f}}}
\text{-}R_{A_{G_f}}\right..
\end{equation}
In other words, there is an equivalence of abelian categories$:$
\begin{equation}
{\rm coh}({\mathcal C}_{G_f}) \simeq {\rm coh}({\mathcal C}_{A_{G_f}}).
\end{equation}
\end{proposition}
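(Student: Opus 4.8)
The plan is to prove the proposition case-by-case along the five types of Table~\ref{TabAppendix1}, using the Dolgachev numbers listed in Table~\ref{TabAppendix2} as the tuple $(\alpha_1,\alpha_2,\alpha_3)$. The core idea is that for each invertible $f(x,y,z)$ one can exhibit an explicit monomial substitution expressing $x,y,z$ as monomials in the ``model'' variables $X_1,X_2,X_3$ (after possibly rescaling coefficients), so that $f$ becomes a nonzero scalar multiple of $X_1^{\alpha_1}+X_2^{\alpha_2}+X_3^{\alpha_3}$. For Type~I this is the identity substitution. For the other four types one reads off the substitution from the shape of $f$: e.g.\ for Type~II, $f=x^{p_1}+y^{p_2}+yz^{p_3/p_2}$, one takes $X_1=x$, $X_2=z$, and $X_3$ a suitable monomial in $y,z$ so that $y^{p_2}+yz^{p_3/p_2}$ becomes $X_2^{p_3/p_2}+X_3^{(p_2-1)p_1}$ up to the grading bookkeeping; the analogous substitutions for Types~III, IV, V are dictated by the cyclic/chain structure of those polynomials. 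This gives the ring embedding $R_f\hookrightarrow R_{A_{G_f}}$. The key point to check in each case is that the induced map on grading groups $L_f\to L_{A_{G_f}}$ is an isomorphism after restricting to the subgroup generated by the images, or more precisely that $R_{A_{G_f}}$ is the integral closure (or an appropriate ``saturation'') of $R_f$ so that the two localizations away from the irrelevant ideal agree.

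First I would set up, for each type, the explicit list of weights $(w_1,w_2,w_3;d)$ and compute $G_f$ and $L_f$ concretely; this is routine given the classification and the definitions in Section~1. Second, I would write down the monomial substitution $x\mapsto \prod X_j^{a_{1j}},\ y\mapsto\prod X_j^{a_{2j}},\ z\mapsto\prod X_j^{a_{3j}}$ realizing $f$ as $X_1^{\alpha_1}+X_2^{\alpha_2}+X_3^{\alpha_3}$ up to scalars, and verify that this substitution is compatible with the gradings, i.e.\ it induces a homomorphism $\phi\colon L_f\to L_{A_{G_f}}$ under which the $L_f$-grading on $R_f$ is the pullback of the $L_{A_{G_f}}$-grading. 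Third, I would check that $\phi$ is injective (so the embedding $R_f\hookrightarrow R_{A_{G_f}}$ is grading-preserving in the strong sense) and that the cokernel of $\phi$ is finite, reflecting the fact that passing from $f$ to its ``Fermatization'' $X_1^{\alpha_1}+X_2^{\alpha_2}+X_3^{\alpha_3}$ only modifies the stacky structure at the isotropic points, not the underlying coarse curve.

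The last and decisive step is to deduce the equivalence of quotient stacks. Here I would invoke the standard fact (Serre-type description, as in Geigle--Lenzing \cite{gl:1}) that for a graded ring $R$ which is normal and of dimension $2$ with isolated singularity at the irrelevant ideal, the category $\mathrm{mod}^{L}\text{-}R/\mathrm{tor}^{L}\text{-}R$ depends only on the associated quotient stack $[\mathrm{Spec}(R)\setminus\{0\}/\mathrm{Spec}(\CC L)]$, and that this stack is unchanged under replacing $R$ by a finite birational extension inside the same function field provided one simultaneously enlarges $L$ so as to keep track of the extra stacky points. Concretely, the substitution exhibits $R_{A_{G_f}}$ as such an extension of $R_f$, the grading groups $L_f\subseteq L_{A_{G_f}}$ match up the tori, and hence $\mathcal{C}_{G_f}\simeq\mathcal{C}_{A_{G_f}}$; the identification ${\rm coh}(\mathcal{C}_{A_{G_f}})\simeq{\rm coh}(\mathcal{C}_{G_f})$ of abelian categories follows formally.

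I expect the main obstacle to be the bookkeeping for Types~IV and V, where $f$ is a ``chain'' resp.\ ``loop'' polynomial: there the substitution realizing the Fermat form is less transparent, the weights and the finite part of $G_f$ are genuinely intertwined, and one must be careful that the map $\phi\colon L_f\to L_{A_{G_f}}$ is well-defined and injective rather than merely a map of the free quotients. Verifying that the Dolgachev numbers in Table~\ref{TabAppendix2} are exactly the orders of isotropy produced by this construction — in particular the ``$-1$'' corrections such as $(p_2-1)p_1$ and $q_2q_3-q_3+1$ — amounts to a determinant/index computation on the lattices $L_f$ and $L_{A_{G_f}}$, and getting all five of these right uniformly is the technical heart of the argument.
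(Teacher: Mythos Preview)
Your approach is essentially the paper's: a case-by-case monomial embedding $x,y,z\mapsto$ monomials in $X_1,X_2,X_3$ (the paper records these in Table~\ref{TabGenerator}) followed by an appeal to \cite{gl:1} and \cite{t:1} for the categorical equivalence. One point to correct: outside Type~I the image of $f$ is not a \emph{scalar} multiple of $X_1^{\alpha_1}+X_2^{\alpha_2}+X_3^{\alpha_3}$ but a \emph{monomial} multiple of it---for instance the paper's Type~II map $x\mapsto X_1X_3$, $y\mapsto X_3^{p_1}$, $z\mapsto X_2$ sends $f$ to $X_3^{p_1}\bigl(X_1^{\alpha_1}+X_2^{\alpha_2}+X_3^{\alpha_3}\bigr)$, which is what makes the map descend to $R_f$; your own Type~II sketch (setting $X_1=x$, $X_2=z$) runs in the wrong direction and will not close up.
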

\begin{proof}
The proof is the same as the one in \cite{gl:1} and \cite{t:1} 
except the definition of the map $R_f\hookrightarrow R_{A_{G_f}}$ 
given by Table \ref{TabGenerator}.
\begin{table}[h]
\begin{center}
\begin{tabular}{|c||c|c|c|c|}
\hline
Type & $f(x,y,z)$ & $x$ & $y$ & $z$ \\
\hline
\hline
I & $x^{p_1}+y^{p_2}+z^{p_3}$ & $X_1$ & $X_2$ & $X_3$ \\
\hline
II & $x^{p_1}+y^{p_2}+yz^{\frac{p_3}{p_2}}$ & $X_1X_3$ & $X_3^{p_1}$ & $X_2$ \\
\hline
III & $x^{p_1}+zy^{q_2+1}+yz^{q_3+1}$ & $X_1X_2X_3$ & $X_2^{p_1}$ & $X_3^{p_1}$\\
\hline
IV & $x^{p_1}+xy^{\frac{p_2}{p_1}}+yz^{\frac{p_3}{p_2}}$ 
& $X_2^{\frac{p_3}{p_2}}X_3$ & $X_3^{p_1}$ & $X_1X_2$ \\
\hline
V & $x^{q_1}y+y^{q_2}z+z^{q_3}x$ & $X_2X_3^{q_2}$ & $X_3X_1^{q_3}$ & $X_1X_2^{q_1}$ \\
\hline
\end{tabular}
\end{center}
\caption{The image of the generators in $R_{A_{G_f}}$}\label{TabGenerator}
\end{table}
\end{proof}
Next, we give another proof which is more geometric. 
\begin{lemma}\label{lem:genus}
The genus of the underlying smooth projective curve $C_{G_f}$ 
of the stack ${\mathcal C}_{G_f}$ is zero.
\end{lemma}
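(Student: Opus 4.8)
The plan is to compute the genus of $C_{G_f}$ directly from a presentation of the function field, handling the five types of Table~\ref{TabAppendix1} separately but uniformly. First I would observe that $C_{G_f}$ is the coarse moduli space of $\bigl[(\CC^3\setminus\{0\})/G_f\bigr]$, and that $G_f$ contains the one-parameter subgroup coming from the weighted $\CC^\ast$-action; quotienting first by that $\CC^\ast$ gives the weighted projective plane $\PP(w_1,w_2,w_3)$, and then $C_{G_f}$ is the image of the hypersurface $\{f=0\}$ under the residual action of the finite group $G_f/\CC^\ast$. So $C_{G_f}$ is a finite quotient of the projective plane curve $\{f=0\}\subset\PP(w_1,w_2,w_3)$. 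An alternative and perhaps cleaner route, which I would actually carry out, is to use the equivalence $\mathrm{coh}(\mathcal C_{G_f})\simeq\mathrm{coh}(\mathcal C_{A_{G_f}})$ just proved in the Proposition: since $R_{A_{G_f}}=\CC[X_1,X_2,X_3]/(X_1^{\alpha_1}+X_2^{\alpha_2}+X_3^{\alpha_3})$ with its $L_{A_{G_f}}$-grading presents a Geigle--Lenzing weighted projective line, whose underlying coarse curve is well known to be $\PP^1$, the genus is automatically zero. That said, the Proposition quoting $\mathrm{coh}$-equivalences already yields Theorem~\ref{thm:orbifold} outright, so in the geometric proof the point of Lemma~\ref{lem:genus} is to get genus zero \emph{independently}, by an honest computation.

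The honest computation I would do goes type by type. For Type~I the curve $\{x^{p_1}+y^{p_2}+z^{p_3}=0\}\subset\PP(w_1,w_2,w_3)$ with $w_i=\mathrm{lcm}(p_1,p_2,p_3)/p_i$ (up to the reduction factor) has a parametrization: set $z=1$ on the affine chart and note the Fermat-type curve $x^{p_1}+y^{p_2}=-1$ admits a rational parametrization after passing to the quotient by the diagonal $\mu_{p_1}\times\mu_{p_2}$-action that is part of $G_f/\CC^\ast$; the quotient curve is rational. More efficiently, one sees directly from Table~\ref{TabGenerator} that the field of $G_f$-invariant rational functions of degree zero is generated by a single element. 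Concretely, for each type I would exhibit one explicit degree-zero rational function $u = x^a y^b z^c$ (with the exponent vector in $L_f$ of degree $0$) that generates the function field $\CC(\mathcal C_{G_f})$ over $\CC$; the relation $f=0$ then becomes a relation expressing, say, one monomial as a rational function of $u$, and because $f$ is a sum of exactly three monomials, after homogenizing this is a relation of the form ``sum of three monomials in $u$ equals zero'', i.e. a rational (genus-zero) relation. The monomials in Table~\ref{TabGenerator} are precisely engineered so that the three monomials of $f$ map to three monomials in $X_1,X_2,X_3$ whose ratios are powers of a single variable on $\mathcal C_{A_{G_f}}\cong\PP^1$; pulling this back through the embedding $R_f\hookrightarrow R_{A_{G_f}}$ gives the generator of $\CC(\mathcal C_{G_f})$ I want.

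The key steps, in order: (1) reduce $\mathcal C_{G_f}$ to the curve $\{f=0\}\subset\PP(w_1,w_2,w_3)$ modulo the finite group $H_f:=G_f/\CC^\ast$, and note the coarse space only depends on the function field $\CC(x,y,z)^{G_f}_0$ of $G_f$-invariant degree-zero rational functions; (2) using Table~\ref{TabGenerator}, or a direct check, produce for each of the five types a single generator $u$ of this field, so $\CC(\mathcal C_{G_f})=\CC(u)$; (3) conclude $C_{G_f}=\PP^1$, hence genus $0$. I expect step (2) to be the main obstacle, not because any one case is hard but because the bookkeeping of the grading group $L_f$ (which need not be free) and of which monomials generate the degree-zero part must be done carefully and separately for Types~II through~V, where the weights are genuinely mixed; Type~V ($x^{q_1}y+y^{q_2}z+z^{q_3}x$, the ``loop'' type) will be the fiddliest, since there the symmetry group and the degree-zero invariants are least transparent, and one must verify that the cyclic structure still forces the invariant field to be a rational function field in one variable. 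Once step (2) is in hand, genus zero is immediate and Lemma~\ref{lem:genus} follows; the remainder of the geometric proof of Theorem~\ref{thm:orbifold} is then the separate computation of the three isotropy orders, which is a local (ramification) computation at the three special $G_f$-orbits and is recorded in Table~\ref{TabAppendix2}.
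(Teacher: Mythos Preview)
Your approach is correct in outline but takes a genuinely different route from the paper. The paper proves genus zero by computing $h^0(C_{G_f},\Omega^1)=0$: every holomorphic $1$-form on $C_{G_f}$ is a Poincar\'e residue
\[
\omega(g)=g(x,y,z)\,\frac{x\,dy\wedge dz - y\,dx\wedge dz + z\,dx\wedge dy}{df}
\]
with $g$ a weighted homogeneous representative of a class in $Jac(f)$, and the $G_f$-invariance condition forces $g$ to have a specific degree; a short case-by-case check on Table~\ref{TabAppendix1} then shows no nonzero such $g$ survives in $Jac(f)$. Your plan is instead to exhibit the function field of $C_{G_f}$ as $\CC(u)$ for an explicit degree-zero $G_f$-invariant monomial $u$, using the three-term shape of $f$ to collapse the relation to a rational one.

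Both are case-by-case, but they trade off differently. The paper's residue computation is quick once the form of $\omega(g)$ is written down: one only has to check that a single graded piece of $Jac(f)$ vanishes, and the $G_f$-invariance constraint is a degree condition rather than a generation condition. Your approach is more constructive---it would actually hand you a rational parametrization---but step~(2) is heavier than you may expect: producing a degree-zero invariant $u$ is easy, while verifying that it \emph{generates} the full invariant function field (and not a proper subfield) requires either a degree count for the map $u:C_{G_f}\to\PP^1$ or an explicit description of the degree-zero part of $L_f$, which for Types~IV and~V involves the torsion in $L_f$ and is exactly the bookkeeping you flag. Leaning on Table~\ref{TabGenerator} to guess $u$ is fine heuristically, but if you then appeal to the fact that the embedding $R_f\hookrightarrow R_{A_{G_f}}$ is birational on coarse spaces you have re-imported the categorical argument; to keep the proof independent you must do the generation check by hand.
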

\begin{proof}
We shall calculate the dimension of the space of holomorphic $1$-forms 
on the curve $C_{G_f}$.
Recall that any holomorphic $1$-form on the curve $C_{G_f}$ 
is of the following form: 
\[
\omega(g):=g(x,y,z)\frac{xdy\wedge dz-ydx\wedge dz+zdx\wedge dy}{df},
\]
where $g(x,y,z)$ is a weighted homogeneous representative of an element in the 
Jacobian ring $Jac(f)$. Note also that $\omega(g)$ must be $G_f$-invariant. 
By a case by case study based on Table \ref{TabAppendix1}, 
we can show that $g(x,y,z)=0$ in $Jac(f)$.
\end{proof}
\begin{remark}
The above proof is a generalization of the one of \cite[Theorem 3]{s:1}.
\end{remark}
\begin{lemma}\label{lem:isotropy}
On the underlying smooth projective curve $C_{G_f}$ of the stack ${\mathcal C}_{G_f}$,
there exist at most three isotropic points of orders $\alpha_1,\alpha_2,\alpha_3$ 
given in Table \ref{TabAppendix2}.
\end{lemma}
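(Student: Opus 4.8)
The plan is to prove Lemma \ref{lem:isotropy} by a direct case-by-case analysis based on the classification in Table \ref{TabAppendix1}. For each of the five types, I would make the $G_f$-action on $\CC^3\backslash\{0\}$ explicit, locate the points with nontrivial stabilizer, and compute the order of each stabilizer, checking that the answer matches the triple $(\alpha_1,\alpha_2,\alpha_3)$ in Table \ref{TabAppendix2}. Since Lemma \ref{lem:genus} already tells us the coarse moduli space $C_{G_f}$ is $\PP^1$, the only remaining content is the orbifold structure, i.e. the isotropy data.

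\smallskip
First I would set up the general formalism. From the definition of $G_f$ as the subgroup of $(\CC^\ast)^3$ on which all three monomial characters $\prod_j\lambda_j^{E_{ij}}$ agree, one sees $G_f$ is an extension of $\CC^\ast$ (acting via the weight system $(w_1,w_2,w_3;d)$) by the finite group of solutions with $\lambda$-value $1$; writing $G_f$ concretely in each type, a point $(x,y,z)\in\CC^3\backslash\{0\}$ has nontrivial stabilizer precisely when some coordinates vanish, and the relevant points correspond to the ``axes'' and the ``coordinate points'' $[1:0:0]$, $[0:1:0]$, $[0:0:1]$ of the curve. For each such point I would solve the linear (multiplicative) equations cutting out the stabilizer subgroup of $G_f$ and read off its order. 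A clean way to organize this: use the embedding $R_f\hookrightarrow R_{A_{G_f}}$ from Table \ref{TabGenerator} together with the fact that $\mathrm{Spec}(\CC L_{A_{G_f}})$ acts on $\mathrm{Spec}(R_{A_{G_f}})$ with the three isotropic points $X_1=0$, $X_2=0$, $X_3=0$ of orders $\alpha_1,\alpha_2,\alpha_3$; then one only has to check that the stack isomorphism $\mathcal{C}_{G_f}\simeq\mathcal{C}_{A_{G_f}}$ (or the coordinate expressions in Table \ref{TabGenerator}) carries the special points to the special points with the stated multiplicities. Alternatively, and perhaps more transparently for a self-contained geometric proof, I would just work directly with $G_f$: e.g. for Type V, $f=x^{q_1}y+y^{q_2}z+z^{q_3}x$, the stabilizer of $[1:0:0]$ inside $G_f$ is computed from the weight data and turns out to have order $q_1q_2-q_2+1$ (up to the obvious permutation of roles), and similarly for the other two fixed points and the other four types.

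\smallskip
The bulk of the work is bookkeeping: Types I and III are symmetric and easy; Types II, IV, V involve non-reduced or ``cyclic'' weight systems where one must be careful to use the correct (possibly non-reduced) system of weights, since $L_f$ need not be free and $G_f$ can have a finite part that contributes to the isotropy orders. The computation of the weight system $(w_1,w_2,w_3;d)$ for each normal form from Table \ref{TabAppendix1} is standard (solve $\sum_j E_{ij}w_j = d$ for all $i$), and then the stabilizer of the point with, say, $x=0$ is the set of $(\lambda_1,\lambda_2,\lambda_3)\in G_f$ fixing $[0:y:z]$, whose order one extracts from the resulting congruences. I would present this as a short table of the relevant stabilizer computations, one row per type, verifying the entries of Table \ref{TabAppendix2}.

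\smallskip
The main obstacle I anticipate is not conceptual but organizational: making sure that in the non-reduced cases (II, IV, V) the finite part of $G_f$ is accounted for correctly, so that the isotropy order at each special point is genuinely $\alpha_i$ and not merely a divisor or multiple of it, and confirming that no additional isotropic points appear away from the three coordinate points. This is handled by checking that the generic stabilizer on $\CC^3\backslash\{0\}$ (for $xyz\neq 0$) is exactly the $\CC^\ast$ acting freely modulo the scaling — equivalently that $G_f$ acts with trivial generic stabilizer on the curve — which again is a case-by-case verification using the explicit weight systems. Once that is in place, combining Lemma \ref{lem:genus} and Lemma \ref{lem:isotropy} gives Theorem \ref{thm:orbifold}: $\mathcal{C}_{G_f}$ is $\PP^1$ with exactly the isotropic points of orders $\alpha_1,\alpha_2,\alpha_3$ listed in Table \ref{TabAppendix2}.
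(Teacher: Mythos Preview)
Your proposal is correct and follows essentially the same approach as the paper's own proof: observe that the diagonal $G_f$-action forces any isotropic point to lie in $\{xyz=0\}$, then carry out a case-by-case computation of the stabilizer subgroups using the defining equations $\prod_j \lambda_j^{E_{1j}} = \prod_j \lambda_j^{E_{2j}} = \prod_j \lambda_j^{E_{3j}}$ of $G_f$, verifying they are cyclic of the orders listed in Table~\ref{TabAppendix2}. The paper's argument is equally terse and leaves the five individual verifications to the reader; your extra remark about checking triviality of the generic stabilizer and your aside about the embedding of Table~\ref{TabGenerator} (which is the paper's separate categorical proof of Theorem~\ref{thm:orbifold}) are reasonable additions but not departures from the method.
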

\begin{proof}
Since each element $(\lambda_1,\lambda_2, \lambda_3)\in G_f$ acts on $\CC^3$ diagonally
\[
(x,y,z)\mapsto (\lambda_1 x,\lambda _2 y, \lambda_3 z),
\]
any isotropic point must be contained in the subvariety $\{xyz=0\}\subset C_{G_f}$.
By a case by case study based on Table \ref{TabAppendix1}, 
we first see that there are at most three isotropic points. 
Then, by considering the equation 
\[
\prod_{j=1}^3 \lambda_j ^{E_{1j}}=\prod_{j=1}^3 \lambda_j ^{E_{2j}}
=\prod_{j=1}^3\lambda_j^{E_{3j}}
\]
at each isotropic point, we can show that the isotropy group is a cyclic group.
The triple of orders of these isotropy groups coincides with the one
in Table \ref{TabAppendix2}.
\end{proof}
One sees that Theorem \ref{thm:orbifold} now follows from 
the above Lemma \ref{lem:genus} and Lemma \ref{lem:isotropy}.
\end{proof}
\section{Gabrielov numbers for $f(x,y,z)$}

\begin{definition}
A polynomial which, by a suitable holomorphic change of coordinates, becomes the polynomial
\[ x^{\gamma_1} + y^{\gamma_2} + z^{\gamma_3} +axyz, \quad \mbox{for some } a \neq 0, \]
is called a polynomial of type $T_{\gamma_1, \gamma_2, \gamma_3}$. 
\end{definition}

\begin{definition} For a triple $(a,b,c)$ of positive integers we define
\[ \Delta(a,b,c) := abc - bc - ac -ab. \]
\end{definition}

\begin{remark} If
\[ \Delta(\gamma_1, \gamma_2, \gamma_3) > 0\]
then a polynomial of type $T_{\gamma_1, \gamma_2, \gamma_3}$ defines a cusp singularity of this type. We do not assume this condition here.
\end{remark}

\begin{sloppypar}

\begin{remark} A polynomial of type $T_{\gamma_1, \gamma_2, \gamma_3}$ has a Coxeter-Dynkin diagram of type $T(\gamma_1, \gamma_2, \gamma_3)$ (see Fig.~\ref{FigTpqr}). This is the numbered graph encoding an intersection matrix of a distinguished basis of vanishing cycles. It corresponds to the matrix  $A=(a_{ij})$ defined by $a_{ii}=-2$, $a_{ij}=0$ if the vertices $\bullet_i$ and $\bullet_j$ are not connected, and
\[  a_{ij}=1 \Leftrightarrow \xymatrix{\bullet_i \ar@{-}[r] & \bullet_j}, \quad a_{ij}=-2 \Leftrightarrow \xymatrix{\bullet_i \ar@{==}[r] & \bullet_j} .\]
The number $(-1)^{\gamma_1+\gamma_2+\gamma_3-1}\Delta(\gamma_1, \gamma_2, \gamma_3)$ is the discriminant of the symmetric bilinear form defined by the matrix $A$, i.e.\  the determinant of $A$.
\end{remark}

\end{sloppypar}

\begin{figure}
$$
\xymatrix{ 
 & & & {\bullet} \ar@{==}[d] \ar@{-}[dr]  \ar@{-}[ldd] \ar@{}^{\gamma_1+\gamma_2+\gamma_3-1}[r] 
 & & &  \\
 {\bullet} \ar@{-}[r] \ar@{}_{\gamma_1}[d]  & {\cdots} \ar@{-}[r]  & {\bullet} \ar@{-}[r] \ar@{-}[ur]   \ar@{}_{\gamma_1+\gamma_2-2}[d] & {\bullet} \ar@{-}[dl] \ar@{-}[r] \ar@{}_{\gamma_1+\gamma_2+\gamma_3-2}[r] & {\bullet} \ar@{-}[r]  \ar@{}^{\gamma_1+\gamma_2+\gamma_3-3}[d]  & {\cdots} \ar@{-}[r]  &{\bullet} \ar@{}^{\gamma_1+\gamma_2-1}[d]   \\
& &  {\bullet} \ar@{-}[dl] \ar@{}_{\gamma_1-1}[r]  & & & &  \\
 & {\cdots} \ar@{-}[dl] & & & & & \\
{\bullet}  \ar@{}_{1}[r] & & & & & &
  }
$$
\caption{The graph $T(\gamma_1, \gamma_2, \gamma_3)$} \label{FigTpqr}
\end{figure}
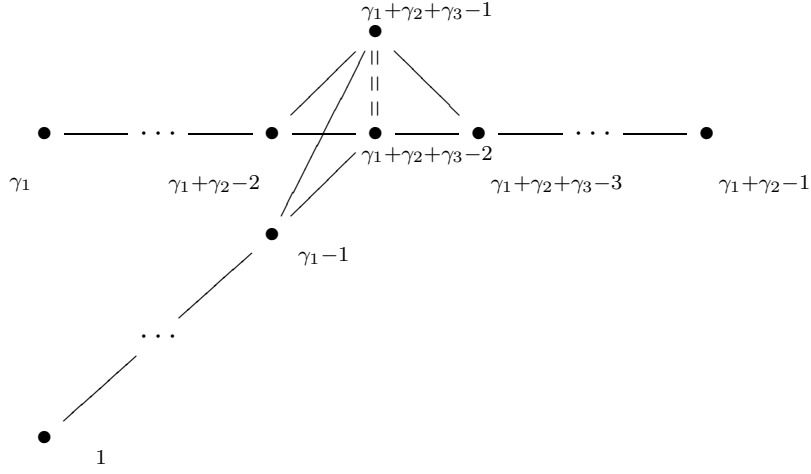

\begin{theorem} \label{thm:cusp}
Let $f(x,y,z)$ be an invertible polynomial. We associate to $f$ the numbers $\gamma_1, \gamma_2, \gamma_3$ according to Table~\ref{TabGabrielov}. \begin{itemize}
\item[{\rm (i)}] If $\Delta(\gamma_1, \gamma_2, \gamma_3) < 0$ then there exists a deformation of the polynomial $f(x,y,z) + xyz$ to a polynomial of type $T_{\gamma_1, \gamma_2, \gamma_3}$.
\item[{\rm (ii)}] If $\Delta(\gamma_1, \gamma_2, \gamma_3)=0$ then for some $a \neq 0$ the polynomial $f(x,y,z) + axyz$ is a 
polynomial of type $T_{\gamma_1, \gamma_2, \gamma_3}$.
\item[{\rm (iii)}] If $\Delta(\gamma_1, \gamma_2, \gamma_3)>0$ then the polynomial $f(x,y,z) + xyz$ is a polynomial of type $T_{\gamma_1, \gamma_2, \gamma_3}$.
\end{itemize}
\begin{table}[h]
\begin{center}
\begin{tabular}{|c||c|c|}
\hline
{\rm Type} & $f(x,y,z)$ & $\left(\gamma_1, \gamma_2, \gamma_3\right)$ \\
\hline
\hline
I & $x^{p_1}+y^{p_2}+z^{p_3}$ & $\left(p_1,p_2,p_3\right)$\\
\hline
II & $x^{p_1}+y^{p_2}+yz^{\frac{p_3}{p_2}}$ & $\left(p_1, p_2, (\frac{p_3}{p_2}-1)p_1\right)$\\
\hline
III & $x^{p_1}+zy^{q_2+1}+yz^{q_3+1}$ &  $\left(p_1,p_1q_2,p_1q_3\right)$\\
\hline
IV & $x^{p_1}+xy^{\frac{p_2}{p_1}}+yz^{\frac{p_3}{p_2}}$ 
& $\left(p_1, (\frac{p_3}{p_2}-1)p_1, \frac{p_3}{p_1}-\frac{p_3}{p_2}+1\right)$\\
\hline
V & $x^{q_1}y+y^{q_2}z+z^{q_3}x$ & $\left(q_2q_3-q_2+1,q_3q_1-q_3+1,q_1q_2-q_1+1\right)$\\
\hline
\end{tabular}
\end{center}
\caption{Gabrielov numbers for  $f$}\label{TabGabrielov}
\end{table}
\end{theorem}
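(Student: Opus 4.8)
The plan is to proceed by the case-by-case analysis dictated by the classification in Table~\ref{TabAppendix1}, handling each of the five types I--V separately. For each type, the goal is to produce an explicit deformation of $f(x,y,z)+xyz$ (or, in the marginal case $\Delta=0$, an explicit scaling of the coefficient of $xyz$) to a polynomial of the form $x^{\gamma_1}+y^{\gamma_2}+z^{\gamma_3}+a\,xyz$ with $(\gamma_1,\gamma_2,\gamma_3)$ as listed in Table~\ref{TabGabrielov}, and to match this up with the sign of $\Delta(\gamma_1,\gamma_2,\gamma_3)$ to land in the correct one of the three regimes (i), (ii), (iii). Type~I is immediate: $f=x^{p_1}+y^{p_2}+z^{p_3}$ is already of type $T_{p_1,p_2,p_3}$ after adding $xyz$, and the sign of $\Delta(p_1,p_2,p_3)$ distinguishes the parabolic case $(p_1,p_2,p_3)\in\{(2,3,6),(2,4,4),(3,3,3)\}$ from the hyperbolic ones; this is the classical picture and serves as a sanity check on the normalization of the Gabrielov numbers.

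The substance is in types II, III, IV, V, where $f+xyz$ is not yet in normal form. First I would write down, for each type, the weight system $(w_1,w_2,w_3;d)$ from the defining equations and check that the monomial $xyz$ is subdominant or equal in degree to $d$ exactly when the relevant $\Delta$ is $\le 0$; this pins down when $xyz$ is a genuine deformation term versus a term that must be kept with a generic coefficient. Then, for each type, I would perform the weighted-homogeneous coordinate change that eliminates the ``mixed'' monomials of $f$ in favor of pure powers plus the $xyz$ term. For instance, for type~II, $f = x^{p_1}+y^{p_2}+yz^{p_3/p_2}$: adding $xyz$ and completing an appropriate change of variables in $y$ and $z$ should absorb the $yz^{p_3/p_2}$ term and leave $x^{p_1}+y^{p_2}+z^{\gamma_3}+(\text{unit})xyz$ with $\gamma_3 = (\frac{p_3}{p_2}-1)p_1$; one checks this value is forced by matching Milnor numbers / degrees. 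The analogous but progressively more involved computations for types~III, IV and especially V (the ``loop'' polynomial $x^{q_1}y+y^{q_2}z+z^{q_3}x$, whose symmetry makes the bookkeeping delicate) produce the remaining rows of Table~\ref{TabGabrielov}. In each case the sign of $\Delta(\gamma_1,\gamma_2,\gamma_3)$ is then computed directly from the formulas and compared with whether the coordinate change is a genuine deformation (lowering $xyz$ relative to the leading degree) or an equality of types.

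The main obstacle I anticipate is two-fold. First, the existence of the required coordinate change in the hyperbolic regime $\Delta<0$ is not a formal consequence of weighted homogeneity: one must genuinely exhibit it, and for types~IV and~V this means solving a small nonlinear system for the change-of-variables coefficients, keeping track of which monomials become irrelevant (in the sense of finite determinacy) once $xyz$ is present. Here I would lean on the fact that $f+xyz$ has an isolated singularity and invoke a finite determinacy / Tougeron-type argument to discard higher-order terms, reducing the problem to matching finitely many jets. Second, the borderline case $\Delta=0$ must be isolated carefully: there the coefficient $a$ of $xyz$ cannot be scaled away or deformed freely, so one has to verify that some nonzero $a$ (not necessarily $a=1$) already puts $f+a\,xyz$ exactly in the form $T_{\gamma_1,\gamma_2,\gamma_3}$ — this is where checking that the coordinate change is weight-preserving and of degree zero becomes essential. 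Once these are handled type by type, assembling the three cases (i)--(iii) is routine, and the resulting table is exactly Table~\ref{TabGabrielov}.
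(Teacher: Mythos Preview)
Your overall plan---case-by-case analysis, explicit coordinate change, then finite determinacy to discard the leftover terms---is exactly the paper's approach, and your handling of Type~I and of the trichotomy on the sign of $\Delta$ is fine. But you miss the one observation that collapses the whole proof to a single line per type: the newly added monomial $xyz$ is the cancellation device, not a perturbation to be analyzed after the fact. Concretely, for Type~II one substitutes in $x$, not in $y$ and $z$ as you propose: setting $x\mapsto x - z^{p_3/p_2-1}$ turns $xyz$ into $xyz - yz^{p_3/p_2}$, killing the mixed monomial of $f$ on the nose, while $(x-z^{p_3/p_2-1})^{p_1}$ supplies the pure power $\pm z^{(p_3/p_2-1)p_1}$ together with cross terms $x^j z^k$ that lie strictly above the Newton boundary of the resulting $T$-polynomial and are hence disposable. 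The same trick gives the other types immediately: Type~III uses $x\mapsto x - y^{q_2}-z^{q_3}$; Type~IV uses $x\mapsto x - z^{p_3/p_2-1}$ and $z\mapsto z - y^{p_2/p_1-1}$; Type~V uses the cyclic $x\mapsto x - y^{q_2-1}$, $y\mapsto y - z^{q_3-1}$, $z\mapsto z - x^{q_1-1}$.

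Once you see this, there is no nonlinear system to solve, no preliminary weight-comparison step, and nothing ``delicate'' about Type~V beyond writing down the substitution; the Gabrielov numbers fall out as the exponents of the pure powers produced. Your route via Milnor-number matching and jet comparisons would eventually arrive at the same place, but it is a detour around the one-line mechanism the paper actually uses.
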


\begin{definition}
The numbers $(\gamma_1, \gamma_2, \gamma_3)$ in Theorem \ref{thm:cusp} 
are called the {\em Gabrielov numbers} of $f$ and the tuple is denoted by 
$\Gamma_f$.
\end{definition}

\begin{proof}[Proof of Theorem \ref{thm:cusp}]
We prove this theorem case by case. We indicate in each case the necessary coordinate transformations.

\noindent {\bf Type I}. No transformation is needed.

\noindent {\bf Type II}. $x \mapsto x - z^{\frac{p_3}{p_2}-1}, \ y \mapsto y , \ z \mapsto z$.

\noindent {\bf Type III}. $x \mapsto x - y^{q_2}-z^{q_3}, \ y \mapsto y, \ z \mapsto z$.

\noindent {\bf Type IV}. $x \mapsto x- z^{\frac{p_3}{p_2}-1}, \ y \mapsto y, \ z \mapsto z - y^{\frac{p_2}{p_1}-1}$.

\noindent {\bf Type V}. $x \mapsto x- y^{q_2-1}, \ y \mapsto y -z^{q_3-1}, \ z \mapsto z-x^{q_1-1}$.
\end{proof}

\begin{corollary}\label{cor13}
Let $f(x,y,z)$ be an invertible polynomial with Gabrielov numbers $\Gamma_f=(\gamma_1, \gamma_2, \gamma_3)$.
\begin{itemize}
\item[{\rm (i)}] If $\Delta(\Gamma_f) < 0$ then the polynomial of type $T_{\gamma_1, \gamma_2, \gamma_3}$ deforms to $f$.
\item[{\rm (ii)}] If $\Delta(\Gamma_f) >0$ then the singularity given by $f(x,y,z)=0$ deforms to a cusp singularity of type $T_{\gamma_1, \gamma_2, \gamma_3}$.
\end{itemize}
\end{corollary}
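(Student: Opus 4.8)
The plan is to derive both statements from Theorem~\ref{thm:cusp} by examining the single explicit deformation
\[
F_s(x,y,z):=f(x,y,z)+s\,xyz,\qquad s\in\CC ,
\]
with $F_0=f$. The key preliminary observation is that, for $s\neq 0$, $F_s$ coincides with $f+xyz$ up to a linear coordinate change and an overall nonzero constant: if $(w_1,w_2,w_3;d)$ is the weight system of $f$, the substitution $x_i\mapsto\lambda^{w_i}x_i$ followed by division by $\lambda^{d}$ sends $F_s$ to $f+s\,\lambda^{\,w_1+w_2+w_3-d}\,xyz$, and one checks case by case from Table~\ref{TabAppendix1} that $w_1+w_2+w_3=d$ occurs exactly when $\Delta(\Gamma_f)=0$ (the simple-elliptic case); so in the situations (i) and (ii) we may pick $\lambda$ with $\lambda^{\,w_1+w_2+w_3-d}=s^{-1}$. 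In particular, restricted to a small ball about $0\in\CC^3$, the family $\{F_s\}_{s\in\CC}$ is a deformation of the germ of $f$ whose every fibre over $s\neq 0$ has the same singularity type at the origin as $f+xyz$.

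For (ii), with $\Delta(\Gamma_f)>0$: Theorem~\ref{thm:cusp}(iii) says $f+xyz$ is a polynomial of type $T_{\gamma_1,\gamma_2,\gamma_3}$, and by the remark following the definition of $T_{\gamma_1,\gamma_2,\gamma_3}$ such a polynomial defines a cusp singularity, because $\Delta(\gamma_1,\gamma_2,\gamma_3)>0$. Combined with the previous paragraph, the general fibre of $\{F_s\}$ then carries a cusp singularity of type $T_{\gamma_1,\gamma_2,\gamma_3}$; since $F_0=f$, this is exactly the assertion that the singularity $f=0$ deforms to a cusp of that type.

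For (i), with $\Delta(\Gamma_f)<0$: the same table check gives $w_1+w_2+w_3>d$, so $xyz$ is a term of strictly higher weighted degree than $f$ and hence, by finite determinacy of the quasi-homogeneous isolated singularity $f$, the polynomial $f+xyz$ is right-equivalent to $f$. Theorem~\ref{thm:cusp}(i) then directly provides a deformation of (the germ of) $f\simeq f+xyz$ whose general fibre is a polynomial of type $T_{\gamma_1,\gamma_2,\gamma_3}$, which is the content of (i). (Alternatively, without invoking finite determinacy one may splice the deformation of Theorem~\ref{thm:cusp}(i) onto the family $\{F_s\}$ joining $f$ to $f+xyz$ and use transitivity of the ``deforms to'' relation for germs.) The only genuinely non-formal point is the rescaling in the first paragraph, resting on the identity $\{\Delta(\Gamma_f)=0\}=\{w_1+w_2+w_3=d\}$, which has to be verified type by type from the classification; everything of real substance is already contained in Theorem~\ref{thm:cusp}.
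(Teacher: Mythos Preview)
The paper gives no separate proof of this corollary; it is stated as an immediate consequence of Theorem~\ref{thm:cusp}. Your argument for (ii) is exactly the intended one: the family $f+s\,xyz$ has $f$ at $s=0$ and, by Theorem~\ref{thm:cusp}(iii), a polynomial of type $T_{\gamma_1,\gamma_2,\gamma_3}$ (hence a cusp, since $\Delta>0$) for $s\neq 0$.

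For (i), however, there is a genuine gap: you have the direction reversed. You argue that $f\simeq f+xyz$ as germs (correct, by finite determinacy) and then invoke Theorem~\ref{thm:cusp}(i) to obtain a deformation of $f$ whose general fibre is of type $T_{\gamma_1,\gamma_2,\gamma_3}$; that is, you prove that $f$ deforms to $T$. But the corollary asserts that $T$ deforms to $f$, with $T$ in the role of the special fibre. The distinction is not cosmetic: the next corollary uses (i) together with the principle ``if $A$ deforms to $B$ then a Coxeter--Dynkin diagram of $B$ extends to one of $A$'' to conclude that a Coxeter--Dynkin diagram of $f$ is \emph{contained} in the graph $T(\gamma_1,\gamma_2,\gamma_3)$; your direction would yield the opposite (false) inclusion. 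At the level of germs at the origin your direction is in fact the only one possible: when $\Delta<0$ the germ of $x^{\gamma_1}+y^{\gamma_2}+z^{\gamma_3}+axyz$ at $0$ is strictly \emph{simpler} than $f$ (for $\Gamma_f=(2,3,2)$ it is an $A_2$ germ while $f$ is $D_4$; when $\gamma_1=1$ it is even smooth at $0$), so a germ-at-the-origin argument cannot establish (i) as stated. The paper's formulation treats the $T$-polynomial as carrying the full $T(\gamma_1,\gamma_2,\gamma_3)$ diagram (cf.\ the Remark preceding Theorem~\ref{thm:cusp}), which for $\Delta<0$ requires taking all of its critical points on $\CC^3$ into account, not just the one at the origin.
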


If a singularity $f$ deforms to a singularity $g$ then a Coxeter-Dynkin diagram of $g$ can be extended to a Coxeter-Dynkin diagram of $f$.  Therefore we obtain:

\begin{corollary} 
Let $f(x,y,z)$ be an invertible polynomial with Gabrielov numbers $\Gamma_f=(\gamma_1, \gamma_2, \gamma_3)$. 
\begin{itemize}
\item[{\rm (i)}] If $\Delta(\Gamma_f) < 0$ then a Coxeter-Dynkin diagram of $f$ is contained in the graph of type $T(\gamma_1, \gamma_2, \gamma_3)$. More precisely, a Coxeter-Dynkin diagram of $f$ is of type ADE and the graph $T(\gamma_1, \gamma_2, \gamma_3)$ is the corresponding extended affine ADE-diagram.
\item[{\rm (ii)}] If $\Delta(\Gamma_f) =0$ then the graph $T(\gamma_1, \gamma_2, \gamma_3)$ is a Coxeter-Dynkin diagram of $f$.
\item[{\rm (iii)}] If $\Delta(\Gamma_f)>0$ then the graph of type $T(\gamma_1, \gamma_2, \gamma_3)$ can be extended to a Coxeter-Dynkin diagram of $f$.
\end{itemize}
\end{corollary}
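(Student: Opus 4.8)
The plan is to deduce all three statements from Corollary~\ref{cor13} and Theorem~\ref{thm:cusp}, using two facts already recalled: first, that a polynomial of type $T_{\gamma_1,\gamma_2,\gamma_3}$ has a Coxeter-Dynkin diagram of type $T(\gamma_1,\gamma_2,\gamma_3)$ (the Remark before Figure~\ref{FigTpqr}); second, that if one isolated singularity deforms to another, then a Coxeter-Dynkin diagram of the latter is a full subgraph of a Coxeter-Dynkin diagram of the former (the sentence preceding the corollary). I would also use the identity $\Delta(a,b,c)=abc\bigl(1-\tfrac1a-\tfrac1b-\tfrac1c\bigr)$, so that the sign of $\Delta(\Gamma_f)$ is the sign of $1-\sum\tfrac1{\gamma_i}$; together with $\det A=(-1)^{\gamma_1+\gamma_2+\gamma_3-1}\Delta(\Gamma_f)$ this is exactly what separates the extended affine ADE, the parabolic, and the hyperbolic shape of $T(\gamma_1,\gamma_2,\gamma_3)$.

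Statement (iii) is then immediate: for $\Delta(\Gamma_f)>0$, Corollary~\ref{cor13}(ii) gives a deformation of the singularity $\{f=0\}$ to a cusp singularity of type $T_{\gamma_1,\gamma_2,\gamma_3}$, whose Coxeter-Dynkin diagram $T(\gamma_1,\gamma_2,\gamma_3)$ therefore extends to one of $f$. For (i), $\Delta(\Gamma_f)<0$, Corollary~\ref{cor13}(i) gives a polynomial of type $T_{\gamma_1,\gamma_2,\gamma_3}$ deforming to $f$, so a Coxeter-Dynkin diagram of $f$ is a full subgraph of $T(\gamma_1,\gamma_2,\gamma_3)$; for the refinement I would go type by type through Tables~\ref{TabAppendix1} and~\ref{TabGabrielov}, checking that $\sum\tfrac1{\gamma_i}>1$ holds precisely when $f$ is a simple (ADE) singularity of some type $X$, and then reading off from Figure~\ref{FigTpqr} (via its vertices and edges, or via the determinant and the visible ADE subdiagram) that $T(\gamma_1,\gamma_2,\gamma_3)$ is the extended affine ADE diagram of type $X$ — which in particular re-exhibits the containment. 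For (ii), $\Delta(\Gamma_f)=0$, the same inspection of the tables shows that $(\gamma_1,\gamma_2,\gamma_3)$ is, up to order, $(3,3,3)$, $(2,4,4)$ or $(2,3,6)$ and $f$ is a simple elliptic singularity of type $\widetilde E_6$, $\widetilde E_7$ or $\widetilde E_8$; by Theorem~\ref{thm:cusp}(ii) the polynomial $f+axyz$ is of type $T_{\gamma_1,\gamma_2,\gamma_3}$ for a suitable $a\neq0$, hence again has Coxeter-Dynkin diagram $T(\gamma_1,\gamma_2,\gamma_3)$, and since $t\mapsto f+t\,xyz$ is a $\mu$-constant deformation along a path in the $t$-line joining $0$ to $a$ and missing the finitely many values where the Milnor number jumps, $f$ itself carries this diagram.

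The real content is already inside Theorem~\ref{thm:cusp} and Corollary~\ref{cor13}; the rest is reorganization plus the two general principles above. Hence the main obstacle is not conceptual but the bookkeeping behind (i) and (ii): the case-by-case check that the sign of $\Delta(\Gamma_f)$ partitions the five families of invertible polynomials into simple, simple elliptic, and the remaining ones, and — in (i) — the identification of $T(\gamma_1,\gamma_2,\gamma_3)$ with the extended affine ADE diagram of the correct type. A second point to handle with care is the $\mu$-constancy step in (ii), which is what allows $f+axyz$ to be replaced by $f$.
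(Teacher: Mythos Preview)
Your proposal is correct and follows the same route as the paper, which derives the corollary directly from Corollary~\ref{cor13} together with the deformation principle stated just before it (``if $f$ deforms to $g$ then a Coxeter--Dynkin diagram of $g$ extends to one of $f$''); the paper gives no further argument beyond ``Therefore we obtain''. Your expansion---the case-by-case identification of the ADE/extended-affine types in (i) and the $\mu$-constant argument bridging $f$ and $f+axyz$ in (ii)---fills in precisely the details the paper leaves implicit.
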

Moreover, Corollary \ref{cor13} gives us a relation (a semi-orthogonal decomposition
theorem) between the Fukaya categories
$D^b{\rm Fuk}^\to(f)$ and
$D^b{\rm Fuk}^\to(T_{\gamma_1,\gamma_2,\gamma_3})$,
which is mirror dual to the one proven by Orlov \cite{o:1}.

\section{Strange Duality}

Now we are ready to state our main theorem in this paper.
\begin{theorem}\label{thm:main}
Let $f(x,y,z)$ be an invertible polynomial.
Then we have 
\begin{equation}
A_{G_f}=\Gamma_{f^t},\quad A_{G_{f^t}}=\Gamma_{f}.
\end{equation}
Namely, the Dolgachev numbers $A_{G_f}$ for the pair $(f,G_f)$ coincide with 
the Gabrielov numbers $\Gamma_{f^t}$ for the Berglund--H\"{u}bsch transpose $f^t$ of $f$, 
and the Dolgachev numbers $A_{G_{f^t}}$ for the pair $(f^t,G_{f^t})$ coincide with 
the Gabrielov numbers $\Gamma_f$ for $f$.
\begin{table}[h]
\begin{center}
\begin{tabular}{|c||c||c|}
\hline
{\rm Type} & $A_{G_f}=\left(\alpha_1,\alpha_2,\alpha_3\right)=\Gamma_{f^t}$ 
& $\Gamma_{f}=\left(\gamma_1,\gamma_2,\gamma_3\right)=A_{G_{f^t}}$\\
\hline
\hline
I & $\left(p_1,p_2,p_3\right)$ & $\left(p_1,p_2,p_3\right)$\\
\hline
II & $\left(p_1,\frac{p_3}{p_2},(p_2-1)p_1\right)$ 
& $\left(p_1, p_2, (\frac{p_3}{p_2}-1)p_1\right)$\\
\hline
III & $\left(p_1,p_1q_2,p_1q_3\right)$ & $\left(p_1,p_1q_2,p_1q_3\right)$\\
\hline
IV & $\left(\frac{p_3}{p_2}, (p_1-1)\frac{p_3}{p_2}, p_2-p_1+1\right)$ 
& $\left(p_1, (\frac{p_3}{p_2}-1)p_1, \frac{p_3}{p_1}-\frac{p_3}{p_2}+1\right)$\\
\hline
V & $\left(q_2q_3-q_3+1,q_3q_1-q_1+1,q_1q_2-q_2+1\right)$ 
& $\left(q_2q_3-q_2+1,q_3q_1-q_3+1,q_1q_2-q_1+1\right)$ \\
\hline
\end{tabular}
\end{center}
\caption{Strange duality}\label{TabAppendix3}
\end{table}
\end{theorem}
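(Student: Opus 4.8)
The plan is to prove Theorem \ref{thm:main} by a direct case-by-case comparison, using the explicit tables already established. The key observation is that both the Dolgachev numbers $A_{G_f}$ (Theorem \ref{thm:orbifold}, Table \ref{TabAppendix2}) and the Gabrielov numbers $\Gamma_f$ (Theorem \ref{thm:cusp}, Table \ref{TabGabrielov}) have been computed for each of the five types I--V in terms of the exponents appearing in the normal forms of Table \ref{TabAppendix1}. So the theorem reduces to the purely combinatorial assertion that, for each type, $A_{G_f}$ equals $\Gamma_{f^t}$ as an unordered triple. Thus the first step is to record, for each type, the normal form of $f^t$ from the last column of Table \ref{TabAppendix1}, and to identify which type $f^t$ belongs to.

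The second step is the bookkeeping of exponents under transposition. For types I and III the polynomial $f$ is (up to coordinate change) symmetric under Berglund--H\"ubsch transposition, so $f^t$ is of the same type with the same parameters, and the claim is the trivial identity $A_{G_f}=\Gamma_f=\Gamma_{f^t}$ read off the two tables. For type II, $f = x^{p_1}+y^{p_2}+yz^{p_3/p_2}$ transposes to $f^t = x^{p_1}+y^{p_2}z+z^{p_3/p_2}$; after relabelling variables this is again of type II but with the roles of the chain exponents permuted, and one checks that plugging the permuted parameters into the $\Gamma$-formula of Table \ref{TabGabrielov} reproduces $(p_1, p_3/p_2, (p_2-1)p_1)$, which is exactly $A_{G_f}$ from Table \ref{TabAppendix2}. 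Type IV is the genuinely asymmetric case: $f = x^{p_1}+xy^{p_2/p_1}+yz^{p_3/p_2}$ is a chain-type polynomial and $f^t = x^{p_1}y + y^{p_2/p_1}z + z^{p_3/p_2}$ is of the ``loop-truncated'' form; here I would carefully match the weight data of $f^t$ against the $\Gamma$-formula and verify $\Gamma_{f^t} = (p_3/p_2, (p_1-1)p_3/p_2, p_2-p_1+1) = A_{G_f}$. Finally, for type V, $f = x^{q_1}y+y^{q_2}z+z^{q_3}x$ transposes to $f^t = zx^{q_1}+xy^{q_2}+yz^{q_3}$, which is again a loop polynomial of type V with the cyclic order of the $q_i$ reversed; substituting $(q_1,q_2,q_3) \mapsto (q_3,q_2,q_1)$ (or the appropriate cyclic reversal) into the type-V entry of Table \ref{TabGabrielov} must yield $(q_2q_3-q_3+1, q_3q_1-q_1+1, q_1q_2-q_2+1)$, matching the type-V entry of Table \ref{TabAppendix2}. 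The symmetric statement $A_{G_{f^t}}=\Gamma_f$ then follows immediately by applying the first identity to $f^t$ in place of $f$ and using $(f^t)^t = f$.

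The only subtlety — and the step I expect to demand the most care rather than the most ideas — is the precise tracking of the permutation of variables and the possibly non-reduced systems of weights when passing from $f$ to $f^t$ in types II, IV, and V. One must check that the reordering of the exponent data induced by transposing the matrix $(E_{ij})$ is exactly the reordering that, fed into the Gabrielov-number formula, produces the Dolgachev triple; a sign or index slip here would break the match. A clean way to organize this is to assemble one master table (essentially Table \ref{TabAppendix3}) whose two data columns are computed independently from Tables \ref{TabAppendix2} and \ref{TabGabrielov}, and then observe that the columns agree type by type. No deeper input is needed: the categorical and geometric content has already been absorbed into Theorems \ref{thm:orbifold} and \ref{thm:cusp}, and what remains is elementary algebra of exponents.
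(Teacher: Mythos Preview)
Your proposal is correct and follows essentially the same approach as the paper: the paper's proof consists of the single sentence ``This can be easily checked by Tables \ref{TabAppendix1}, \ref{TabAppendix2}, and \ref{TabGabrielov},'' i.e., exactly the type-by-type table comparison you outline, with Table \ref{TabAppendix3} serving as the master table you describe. Your write-up is in fact more detailed than the paper's, spelling out the variable relabellings needed to put $f^t$ back into normal form for types II, IV, and V; this is precisely the bookkeeping the paper leaves implicit.
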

\begin{proof}
This can be easily checked by 
Tables \ref{TabAppendix1}, \ref{TabAppendix2}, and \ref{TabGabrielov}.
See Table \ref{TabAppendix3}.
\end{proof}
It is convenient in the next section to introduce the following:
\begin{definition}
Let $X$ and $Y$ be weighted homogeneous isolated hypersurface singularities 
of dimension $2$. 
If there exists an invertible polynomial $f(x,y,z)$ such that 
$f$ represents the singularity $X$ and $f^t$ represents 
the singularity $Y$, then $Y$ is called {\em $f$-dual} to $X$.
\end{definition}
Note that $Y$ is $f$-dual to $X$ if and only if $X$ is $f^t$-dual to $Y$. 
\begin{definition}
Let $X$ be a weighted homogeneous isolated hypersurface singularity
of dimension $2$. 
The singularity $X$ is called {\em $f$-selfdual} if $X$ is $f$-dual to $X$. 
\end{definition}
If the invertible polynomial $f$ is clear from the context, 
we shall often say ``dual" instead of ``$f$-dual" for simplicity. 
\section{Additional features of the duality}

\begin{theorem}
Let $f(x,y,z)$ be an invertible polynomial.
Then, we have 
\begin{equation}
\Delta(A_{G_f})=\Delta(A_{G_{f^t}}).
\end{equation}
\end{theorem}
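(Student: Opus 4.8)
The plan is to exploit the symmetry already built into the main theorem rather than to attack $\Delta(A_{G_f})$ directly. By Theorem~\ref{thm:main} we have $A_{G_f} = \Gamma_{f^t}$ and $A_{G_{f^t}} = \Gamma_f$, so the claimed identity $\Delta(A_{G_f}) = \Delta(A_{G_{f^t}})$ is equivalent to
\[
\Delta(\Gamma_{f^t}) = \Delta(\Gamma_f),
\]
and since the Berglund--H\"ubsch transpose is an involution on invertible polynomials, it suffices to prove that $\Delta(\Gamma_f)$ is invariant under $f \mapsto f^t$ for each of the five types in Table~\ref{TabAppendix1}. Equivalently, using $A_{G_f}=\Gamma_{f^t}$ once more, it suffices to check $\Delta(A_{G_f}) = \Delta(\Gamma_f)$ for a single representative of each type and then invoke the involution; but the cleanest bookkeeping is simply to compute $\Delta$ on both columns of Table~\ref{TabAppendix3} and observe they agree.

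The key steps, then, are: first, reduce to a finite verification via Theorem~\ref{thm:main} and the involutivity of the transpose, as above; second, for Types I and III there is nothing to do, since the two triples in Table~\ref{TabAppendix3} are literally equal (Type~I self-dual, Type~III self-dual); third, for Types II, IV, V carry out the polynomial identity. For Type~II one must check
\[
\Delta\!\left(p_1,\tfrac{p_3}{p_2},(p_2-1)p_1\right) = \Delta\!\left(p_1,p_2,(\tfrac{p_3}{p_2}-1)p_1\right),
\]
which after clearing denominators (write $p_2' = p_3/p_2$) becomes a symmetric polynomial identity in $p_1, p_2, p_2'$; one expands $\Delta(a,b,c) = abc - ab - bc - ca$ on both sides and matches terms. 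Type~IV is the analogous but slightly longer computation with the triples from the IV-row, and Type~V requires expanding $\Delta(q_2q_3-q_3+1,\,q_3q_1-q_1+1,\,q_1q_2-q_2+1)$ against $\Delta(q_2q_3-q_2+1,\,q_3q_1-q_3+1,\,q_1q_2-q_1+1)$ and checking that the difference vanishes identically as a polynomial in $q_1,q_2,q_3$ — here one expects the symmetry under the cyclic and the reversal substitutions $q_i \leftrightarrow q_{-i}$ to make the cancellation transparent.

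The only genuine obstacle is the Type~V computation: the three entries are already quadratic in the $q_i$, so $\Delta$ of the triple is a degree-six polynomial, and a brute-force expansion is unpleasant. I would instead argue structurally: the second triple is obtained from the first by the order-reversing permutation on the indices $(1,2,3)$, i.e.\ $q_i \mapsto q_{4-i}$ composed with a relabeling, and $\Delta(a,b,c)$ is a symmetric function of its arguments, hence invariant under any permutation of the entries; so once one checks that the multiset $\{q_2q_3-q_3+1,\,q_3q_1-q_1+1,\,q_1q_2-q_2+1\}$ is carried to $\{q_2q_3-q_2+1,\,q_3q_1-q_3+1,\,q_1q_2-q_1+1\}$ by the substitution $(q_1,q_2,q_3)\mapsto(q_1,q_3,q_2)$ (which swaps $f$ and $f^t$ in type V, up to coordinate change), the equality of the $\Delta$'s is immediate from the symmetry of $\Delta$, with no expansion needed. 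The same structural observation dispatches Types~II and~IV: in each case the dual triple is a permutation of the original one after the appropriate change of parameters, and $\Delta$ is permutation-symmetric. Thus the real content of the proof is the identification of the two triples as permutations of one another, which can be read off Table~\ref{TabAppendix3} type by type.
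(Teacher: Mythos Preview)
Your overall strategy---reduce via Theorem~\ref{thm:main} to the two columns of Table~\ref{TabAppendix3} and verify $\Delta$ agrees type by type---is exactly what the paper does, and Types~I and~III are indeed immediate. The direct-expansion plan you sketch for Types~II and~IV is also correct and routine.

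The gap is in your ``structural'' shortcut for Type~V (and in the last paragraph where you extend it back to Types~II and~IV). You observe correctly that the parameter substitution $(q_1,q_2,q_3)\mapsto(q_1,q_3,q_2)$ carries the multiset $A_{G_f}$ to the multiset $A_{G_{f^t}}$, and that $\Delta$ is symmetric in its three arguments. But this only yields
\[
\Delta\bigl(A_{G_f}\bigr)\big|_{(q_1,q_3,q_2)} \;=\; \Delta\bigl(A_{G_{f^t}}\bigr)\big|_{(q_1,q_2,q_3)},
\]
which is \emph{not} the statement you want: you need equality at the \emph{same} parameter point $(q_1,q_2,q_3)$. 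Symmetry of $\Delta$ in its three entries says nothing about symmetry of $\Delta\circ A_{G_f}$ in the underlying parameters $q_i$. Concretely, take $(q_1,q_2,q_3)=(2,3,4)$: then $A_{G_f}=(9,7,4)$ and $A_{G_{f^t}}=(10,5,5)$ are genuinely different multisets, so no amount of permutation-invariance of $\Delta$ can force their values to coincide. (They do coincide---both equal $125$---but you must actually compute to see it.) The same circularity infects the claim for Types~II and~IV: the two triples there are \emph{not} permutations of each other for fixed parameters, only after a parameter swap, which is a different assertion.

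So the brute-force expansion you were hoping to avoid in Type~V is in fact unavoidable by this route; the paper's proof simply says ``direct calculation based on Table~\ref{TabAppendix3}'' and does not attempt a shortcut. Your first plan---expand $\Delta$ on each side as a polynomial in the $q_i$ and check the difference vanishes---is the right one; the degree-six computation is tedious but elementary.
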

\begin{proof}
This can be easily shown by direct calculation based on Table \ref{TabAppendix3}.
\end{proof}
\begin{remark}
The rational number 
\[
\chi_{G_f}:=2+\sum_{i=1}^3\left(\frac{1}{\alpha_i}-1\right)
=\sum_{i=1}^3\frac{1}{\alpha_i}-1
\]
is called the {\em orbifold Euler number} of the stack 
${\mathcal C}_{A_{G_f}}={\mathcal C}_{G_f}$.
Note that $\Delta(A_{G_f})=-\alpha_1\alpha_2\alpha_3\cdot \chi_{G_f}$.
\end{remark}
\begin{definition}
Let $f(x,y,z)$ be an invertible polynomial.
The {\em canonical system of weights} $W_f$ is the system of weights 
$(w_1,w_2,w_3;d)$ given in  Table \ref{weights}.
\begin{table}[h]
\begin{center}
\begin{tabular}{|c||c|}
\hline
Type & $W_f=\left(w_1,w_2,w_3;d\right)$ \\
\hline
\hline
I & $\left(p_2p_3,p_3p_1,p_1p_2;p_1p_2p_3\right)$\\
\hline
II & $\left(p_3,\frac{p_1p_3}{p_2},(p_2-1)p_1;p_1p_3\right)$ \\
\hline
III & $\left(p_2,p_1q_3,p_1q_2;p_1p_2\right)$\\
& $(p_2+1=(q_2+1)(q_3+1))$\\
\hline
IV & $\left(\frac{p_3}{p_1},(p_1-1)\frac{p_3}{p_2},p_2-p_1+1;p_3\right)$\\
\hline
V & $\left(q_2q_3-q_3+1,q_3q_1-q_1+1,q_1q_2-q_2+1;q_1q_2q_3+1\right)$\\
\hline
\end{tabular}
\end{center}
\caption{Canonical system of weights attached to $f$}\label{weights}
\end{table}
\end{definition}
Note that a canonical system of weights is non-reduced in general.

\begin{definition}
Let $f(x,y,z)$ be an invertible polynomial and $W_f=(w_1,w_2,w_3;d)$ the canonical system of weights attached to $f$. Define
\[ c_f:= {\rm gcd}(w_1, w_2 ,w_3,d). \]
\end{definition}

\begin{definition}
Let $f(x,y,z)$ be a weighted homogeneous polynomial and 
$W:=(w_1,w_2,w_3;d)$ be a system of weights attached to $f$. 
The integer
\[
a_{W} := d- w_1-w_2-w_3
\]
is called the {\em Gorenstein parameter} of $W$. 
\end{definition}
\begin{remark}
The Gorenstein parameter $a_W$ is denoted by $-\epsilon_W$ in \cite{s:2}.
\end{remark}
\begin{remark}
Note that the integer $\Delta(A_{G_f})$ can also be regarded as 
the Gorenstein parameter of the $\ZZ$-graded ring 
$R_{A_{G_f}}=\CC[X_1,X_2,X_3]/(X_1^{\alpha_1}+X_2^{\alpha_2}+X_3^{\alpha_3})$ 
with respect to the system of weights 
$(\alpha_2\alpha_3,\alpha_3\alpha_1,\alpha_1\alpha_2;\alpha_1\alpha_2\alpha_3)$ 
attached to the polynomial 
$X_1^{\alpha_1}+X_2^{\alpha_2}+X_3^{\alpha_3}$.
\end{remark}

\begin{theorem}
Let $f(x,y,z)$ be an invertible polynomial.
Let $W_f$ and $W_{f^t}$ be the canonical systems of weights attached to $f$ and $f^t$.
Then, we have 
\begin{equation}
a_{W_f}=a_{W_{f^t}}.
\end{equation}
\end{theorem}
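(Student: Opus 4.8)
The plan is to verify the identity $a_{W_f} = a_{W_{f^t}}$ by direct computation, going through the five types of invertible polynomials in Table~\ref{weights}. For each type, one computes $a_{W_f} = d - w_1 - w_2 - w_3$ from the canonical system of weights $W_f = (w_1,w_2,w_3;d)$, and then one computes $a_{W_{f^t}}$ either from $W_{f^t}$ directly or by observing how the transpose operation acts on the parameters. The key preliminary observation is that the Berglund--H\"ubsch transpose permutes the five types among themselves: by Table~\ref{TabAppendix1}, types I, III, V are each self-dual (in the sense that $f^t$ is again of the same type, possibly with permuted or transformed parameters), while type II and type IV are each self-dual as well — in every case $f^t$ has a canonical system of weights obtainable from Table~\ref{weights} after the appropriate reparametrization. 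So the proof reduces to five (or fewer, after symmetry) explicit identities between rational functions in the defining exponents.

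The steps, in order: First I would treat Type I, where $f = x^{p_1}+y^{p_2}+z^{p_3}$ is literally self-transpose; here $a_{W_f} = p_1p_2p_3 - p_2p_3 - p_3p_1 - p_1p_2 = \Delta(p_1,p_2,p_3)$, and since $f^t = f$ the identity is trivial. Next, Type III: $f^t$ is again of Type III (with $q_2,q_3$ possibly interchanged, since transposing $zy^{q_2+1}+yz^{q_3+1}$ swaps the roles of the two monomials), so $W_{f^t} = (p_2, p_1q_2, p_1q_3; p_1p_2)$ and $a_{W_f} = p_1p_2 - p_2 - p_1q_3 - p_1q_2 = p_1p_2 - p_2 - p_1(q_2+q_3)$, which is symmetric under $q_2 \leftrightarrow q_3$, hence equal to $a_{W_{f^t}}$. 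For Type II, I would compute $a_{W_f} = p_1p_3 - p_3 - \frac{p_1p_3}{p_2} - (p_2-1)p_1$ and check that transposing sends this parametrization to itself (Type II with the same $p_1, p_2, p_3$, by the shape of Table~\ref{TabAppendix1}), so again the identity is immediate once one notes $f^{tt} = f$ and the canonical weights are an invariant of the type-plus-parameters data. The genuinely substantive cases are Type IV and Type V. For Type IV, $a_{W_f} = p_3 - \frac{p_3}{p_1} - (p_1-1)\frac{p_3}{p_2} - (p_2 - p_1 + 1)$; one must check this is unchanged under the transpose, which for Type IV again returns a Type IV polynomial with the same $(p_1, p_2, p_3)$, so the expression is literally the same. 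For Type V, $f = x^{q_1}y + y^{q_2}z + z^{q_3}x$ transposes to $zx^{q_1}+xy^{q_2}+yz^{q_3}$, which after relabeling is Type V with $(q_1,q_2,q_3)$ cyclically permuted (or reversed); then $a_{W_f} = q_1q_2q_3 + 1 - (q_2q_3 - q_3 + 1) - (q_3q_1 - q_1 + 1) - (q_1q_2 - q_2 + 1) = q_1q_2q_3 - 2 - (q_1q_2+q_2q_3+q_3q_1) + (q_1+q_2+q_3)$, an expression manifestly symmetric in $q_1, q_2, q_3$, hence invariant under the permutation induced by the transpose.

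Actually, the cleanest way to organize all five cases uniformly is to observe that for each type the canonical system of weights $W_f$, and therefore the Gorenstein parameter $a_{W_f}$, depends only on the \emph{unordered} data attached to $f$ in Table~\ref{TabAppendix1}, and that the Berglund--H\"ubsch transpose preserves this unordered data within each type (it may permute the exponents but never changes the multiset in a way that affects $a_{W_f}$). Concretely, one checks in each row of Table~\ref{weights} that $a_{W_f}$ is a symmetric function of the relevant exponents — $\Delta$-type expressions for I, III, V and the mixed rational expressions for II, IV — and then invokes the fact, read off from Table~\ref{TabAppendix1}, that $f \mapsto f^t$ sends each type to itself with parameters permuted. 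The identity $a_{W_f} = a_{W_{f^t}}$ then follows.

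The main obstacle is entirely bookkeeping: one must be careful that the "canonical system of weights" in Table~\ref{weights} is written in a fixed order of the variables, while the transpose naturally comes with a \emph{different} ordering of variables, so one has to match up the two parametrizations correctly before comparing. In Type IV and Type V in particular, the transpose is the composition of the naive exponent-transpose with a permutation of the variables (and possibly a weighted coordinate change to bring it into the normal form of Table~\ref{TabAppendix1}), and one must confirm that this permutation does not disturb the value of $d - w_1 - w_2 - w_3$. Since that quantity turns out to be a symmetric function of the exponents in every case, the verification goes through, but checking the symmetry in the Type IV rational expression $p_3 - \tfrac{p_3}{p_1} - (p_1-1)\tfrac{p_3}{p_2} - (p_2-p_1+1)$ against the transposed parametrization requires a short but non-obvious algebraic manipulation. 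This is the one place where I would actually write out the computation.
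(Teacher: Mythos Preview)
Your approach is the same as the paper's --- case-by-case verification from the tables --- but there is a concrete error in your handling of Types II and IV. You assert that the transpose returns a polynomial ``with the same $(p_1,p_2,p_3)$'', so that the expression for $a_{W_f}$ is ``literally the same''. This is false. The right-hand column of Table~\ref{TabAppendix1} displays $f^t$ in the original parameters, but it is \emph{not} in the normal form of the left-hand column; bringing it to normal form requires a variable permutation that changes the parameters. For Type~II one finds $(p_1,p_2,p_3)\mapsto(p_1,\,p_3/p_2,\,p_3)$ after swapping $y\leftrightarrow z$, and for Type~IV one finds $(p_1,p_2,p_3)\mapsto(p_3/p_2,\,p_3/p_1,\,p_3)$ after swapping $x\leftrightarrow z$. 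The Gorenstein parameter is indeed invariant under each of these substitutions, but that invariance is exactly the content of the theorem in those cases and must be checked --- it is the ``short but non-obvious algebraic manipulation'' you mention at the end, except that it is needed for Type~II just as much as for Type~IV.

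The paper avoids this trap by writing down $W_{f^t}$ directly in terms of the parameters of $f$ (Table~\ref{dualweights}), without attempting to put $f^t$ back into normal form. One then just subtracts and compares: for instance, in Type~II one has $a_{W_{f^t}} = p_1p_3 - p_3 - (\tfrac{p_3}{p_2}-1)p_1 - p_1p_2$, which agrees with $a_{W_f} = p_1p_3 - p_3 - \tfrac{p_1p_3}{p_2} - (p_2-1)p_1$ after expansion. Your treatment of Types~I, III, and~V is correct.
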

\begin{proof}
One can easily show this by direct calculation based on Table \ref{weights} 
and Table \ref{dualweights}.
\begin{table}[h]
\begin{center}
\begin{tabular}{|c||c|}
\hline
Type & $W_{f^t}=\left(w_1,w_2,w_3;d\right)$ \\
\hline
\hline
I & $\left(p_2p_3,p_3p_1,p_1p_2;p_1p_2p_3\right)$\\
\hline
II & $\left(p_3,(\frac{p_3}{p_2}-1)p_1,p_1p_2;p_1p_3\right)$ \\
\hline
III & $\left(p_2,p_1q_3,p_1q_2;p_1p_2\right)$\\
& $(p_2+1=(q_2+1)(q_3+1))$\\
\hline
IV & $\left(\frac{p_3}{p_1}-\frac{p_3}{p_2}+1,(\frac{p_3}{p_2}-1)p_1,p_2;p_3\right)$\\
\hline
V & $\left(q_2q_3-q_2+1,q_3q_1-q_3+1,q_1q_2-q_1+1;q_1q_2q_3+1\right)$\\
\hline
\end{tabular}
\end{center}
\caption{Canonical system of weights attached to ${f^t}$}\label{dualweights}
\end{table}
\end{proof}
We have the following combinatorial formula to calculate 
the canonical systems of weights attached to ${f^t}$:
\begin{theorem}
Let $f(x_1,x_2 ,x_3)=\sum_{i=1}^n\prod_{j=1}^3x_j^{E_{ij}}$ be an invertible polynomial.
Let $E_i$, $i=1,2,3$ be the $3$-dimensional vectors defined by $(E_i)_j:=E_{ij}$. 
Then the canonical system of weights $W_{f^t}$ attached to $f^t$ is given by the formula
\[
W_{f^t}=\left(
(E_2\times E_3)\cdot 
\begin{pmatrix}
1 \\ 1\\ 1
\end{pmatrix},
(E_3\times E_1)\cdot 
\begin{pmatrix}
1 \\ 1\\ 1
\end{pmatrix},
(E_1\times E_2)\cdot 
\begin{pmatrix}
1 \\ 1\\ 1
\end{pmatrix};\det (E_{ij})
\right),
\]
where we denote by $\times$ and $\cdot $ the exterior product and the inner product of 
$3$-dimensional vectors respectively. 
\end{theorem}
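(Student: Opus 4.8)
The plan is to verify the formula directly from the explicit classification in Table~\ref{TabAppendix1}, exploiting the fact that, by definition, $W_{f^t}$ is obtained from the matrix $(E_{ij})$ of $f$ by transposing it, i.e.\ it is the canonical system of weights attached to the polynomial whose exponent matrix is $(E_{ji})=(E_{ij})^T$. So the first step is to reformulate the claim purely in terms of the matrix $E=(E_{ij})$: I want to show that the canonical weight system attached to an invertible polynomial with exponent matrix $M$ is
\[
\left( (M^{(1)})\cdot\mathbf 1,\ (M^{(2)})\cdot\mathbf 1,\ (M^{(3)})\cdot\mathbf 1;\ \det M\right),
\]
where $M^{(k)}$ is (up to sign) the $k$-th row of the adjugate $\operatorname{adj}(M)$ and $\mathbf 1=(1,1,1)^T$; applying this to $M=E^T$ and noting that the rows of $\operatorname{adj}(E^T)=\operatorname{adj}(E)^T$ are the columns of $\operatorname{adj}(E)$, which are exactly the cross products $E_2\times E_3$, $E_3\times E_1$, $E_1\times E_2$ (here $E_i$ is the $i$-th \emph{row} of $E$, as in the statement), gives the asserted formula. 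Thus it suffices to prove the boxed reformulation, that the entries of $\operatorname{adj}(M)\cdot\mathbf 1$ together with $\det M$ form the weights $(w_1,w_2,w_3;d)$ of the invertible polynomial with exponent matrix $M$.

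The second step is to justify this reformulation. The weight system of a weighted homogeneous polynomial $g=\sum_i \prod_j x_j^{M_{ij}}$ is characterized (up to the overall gcd, which is where $c_f$ enters) by the linear equations $\sum_j M_{ij} w_j = d$ for $i=1,2,3$, i.e.\ $M\mathbf w = d\,\mathbf 1$. By Cramer's rule the solution with $d=\det M$ is $\mathbf w=\operatorname{adj}(M)\mathbf 1$, which is precisely the claim; one only has to observe that the \emph{canonical} system of weights in Table~\ref{weights} is indeed this non-reduced solution and not a rescaled one. This last point I would check case by case against Table~\ref{weights}: for each of the five types, write down $E^T$ explicitly (it is read off from the $f^t$ column of Table~\ref{TabAppendix1}), compute $\det E^T$ and the three cross products, and confirm equality with the row for $W_{f^t}$ in Table~\ref{dualweights}. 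Since the $d$-entry listed there is exactly $\det(E_{ij})$ in each case — e.g.\ $p_1p_2p_3$ for type~I, $p_1p_3$ for type~II, $p_1p_2$ for type~III (using $p_2+1=(q_2+1)(q_3+1)$), $p_3$ for type~IV, $q_1q_2q_3+1$ for type~V — this confirms that no further rescaling is hidden in the definition, and the formula follows.

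The main obstacle is the bookkeeping of signs and the row-versus-column convention: $\det E$ can be negative for some orderings of the monomials, and $\operatorname{adj}(M)=\det(M)\,M^{-1}$ carries cofactor signs, so I must make sure the combination $(E_i\times E_j)\cdot\mathbf 1$ comes out \emph{positive} and matches the (positive) weights in the tables, choosing the cyclic order $(23),(31),(12)$ precisely so that the signs work out. Concretely, for the non-diagonal types the matrix $E$ is lower- or upper-triangular-plus-a-corner, so $\det E$ is a signed product of the diagonal exponents and the cross products are easy to evaluate by hand; the only genuinely non-triangular case is type~V (the loop/chain $x^{q_1}y+y^{q_2}z+z^{q_3}x$), where $\det E^T = q_1q_2q_3+1$ and the three cross-product sums reproduce the Gabrielov-type entries $q_2q_3-q_2+1$, etc.; verifying this one case carefully, with attention to the orientation, is the crux, and the remaining four types are then routine.
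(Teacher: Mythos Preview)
Your argument is correct, and the paper's own proof is simply the one-line ``one can easily show this by direct calculation,'' i.e.\ a case-by-case verification against Tables~\ref{TabAppendix1} and~\ref{dualweights}. Your route is the same in spirit but with a useful conceptual shortcut: rather than computing all four entries of $W_{f^t}$ in each of the five types and matching them to Table~\ref{dualweights}, you observe that the weight equations $E^T\mathbf w=d\,\mathbf 1$ are solved by $\mathbf w=\operatorname{adj}(E^T)\mathbf 1$, $d=\det E$, and that the columns of $\operatorname{adj}(E)$ are exactly the cyclic cross products $E_2\times E_3$, $E_3\times E_1$, $E_1\times E_2$; this reduces the case check to the single verification that the degree entry in Table~\ref{dualweights} equals $\det(E_{ij})$ in each type, which you carry out. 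The sign issue you flag is not a genuine obstacle---for each of the five normal forms $\det E>0$ and the cofactor signs built into $\operatorname{adj}$ are precisely what the cyclic order $(23),(31),(12)$ encodes---so your argument goes through without further work.
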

\begin{proof}
One can easily show this by direct calculation.
\end{proof}
For an invertible polynomial $f(x,y,z)$, 
the ring $R_f:=\CC[x,y,z]/(f)$ is a $\ZZ$-graded ring with respect to the canonical system of 
weights $(w_1,w_2,w_3;d)$ attached to $f$. 
Therefore, we can consider the decomposition of $R_f$ 
as a $\ZZ$-graded $\CC$-vector space:
\[
R_f:=\bigoplus_{k\in\ZZ_{\ge 0}} R_{f,k}, \quad R_{f,k}:=\left\{g\in R_f~\left|~
w_1x\frac{\partial g}{\partial x}+w_2y\frac{\partial g}{\partial y}
+w_3x\frac{\partial g}{\partial z}=k g\right.\right\}.
\]
\begin{definition}
Let $f(x,y,z)$ be an invertible polynomial. 
The formal power series
\begin{equation}
p_f(t):=\sum_{k\ge 0} (\dim_\CC R_{f,k}) t^k
\end{equation}
is called the {\em Poincar\'e series} of the $\ZZ$-graded coordinate ring $R_f$
with respect to the canonical system of weights $(w_1,w_2,w_3;d)$ attached to $f$.
\end{definition}
It is easy to see that for an invertible polynomial $f(x,y,z)$ with 
canonical system of weights $(w_1,w_2,w_3;d)$
the Poincar\'e series $p_f(t)$ is given by  
\[
p_f(t)=\frac{(1-t^{w_1})(1-t^{w_2})(1-t^{w_3})}{(1-t^d)}
\]
and defines a rational function.
In order to simplify some notations, we shall denote the rational function of the form 
\[
\frac{\prod_{l=1}^L(1-t^{i_k})}{\prod_{m=1}^M(1-t^{j_l})}
\]
by
\[
i_1\cdot i_2\cdot\ \dots\ \cdot i_L\left/j_1\cdot j_2\cdot\ \dots\ \cdot j_M\right. .
\]
For example, the Poincar\'e series $p_f(t)$ is denoted by 
$w_1\cdot w_2\cdot w_3\left/d \right.$.
\begin{definition}
Let $f(x,y,z)$ be an invertible polynomial. 
Let $p_f(t)$ be the Poincar\'e series of the $\ZZ$-graded coordinate ring 
$R_f$ with respect to the canonical system of weights attached to $f$ and 
$A_{G_f}=(\alpha_1,\alpha_2,\alpha_3)$ be the Dolgachev numbers of the pair $(f,G_f)$.
The rational function
\[
\phi_{G_f}(t) := p_f(t)(1-t)^{2}\prod_{i=1}^3
\frac{1-t^{\alpha_i}}{1-t}
\]
is called the {\em characteristic function} of $f$. 
The functions $\phi_{G_f}(t)$ are listed in Table \ref{phi}.
\begin{table}[h]
\begin{center}
\begin{tabular}{|c||c|}
\hline
Type & $\phi_{G_f}(t)$\\
\hline
\hline
I & $p_1\cdot p_2\cdot p_3\cdot p_1p_2p_3\left/1\cdot p_2p_3\cdot p_3p_1\cdot p_1p_2 \right. $ \\
\hline
II & $p_1\cdot \frac{p_3}{p_2}\cdot p_1p_3\left/1\cdot p_3\cdot \frac{p_1p_3}{p_2}\right. $  \\
\hline
III & $p_1\cdot p_1p_2\left/1\cdot p_2\right. $ \\
\hline
IV & $\frac{p_3}{p_2}\cdot p_3\left/1\cdot \frac{p_3}{p_1}\right. $ \\
\hline
V & $q_1q_2q_3+1\left/1\right. $  \\
\hline
\end{tabular}
\end{center}
\caption{Characteristic function $\phi_{G_f}(t)$}\label{phi}
\end{table}
\end{definition}
Here, we recall the notion of Saito's $*$-duality (see \cite{s:2}):
\begin{definition}
Let $d$ be a positive integer and $\phi(t)$ be a rational function of the form  
\[
\phi(t)=\prod_{i|d}(1-t^{i})^{e(i)}, \quad e(i)\in\ZZ.
\]
The {\em Saito dual} $\phi^* (t)$ of $\phi(t)$ is the rational function 
given by 
\[
\phi^*(t)=\prod_{i|d}(1-t^{\frac{d}{i}})^{-e(i)}.
\]
One easily sees that $(\phi^*)^*(t)=\phi(t)$. 
\end{definition}
The characteristic function $\phi_{G_f}(t)$ may not be a polynomial in general, 
however, by Theorem 1 in \cite{E02}, we have the following:
\begin{theorem}\label{Monodromy}
Let $f(x,y,z)$ be an invertible polynomial whose canonical system of weights
$(w_1,w_2,w_3;d)$ is reduced.
Then the characteristic function $\phi_{G_f}(t)$ is a polynomial.
Moreover, its Saito dual $\phi_{G_f}^*(t)$ is the characteristic polynomial of 
the Milnor monodromy of the singularity $f$.
\qed
\end{theorem}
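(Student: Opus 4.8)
The plan is to derive Theorem~\ref{Monodromy} directly from Theorem~1 of \cite{E02}. That result concerns a two-dimensional quasihomogeneous hypersurface singularity $X=\{f=0\}\subset\CC^3$ with reduced system of weights $(w_1,w_2,w_3;d)$ and an isolated singularity at the origin; it expresses the characteristic polynomial $\Delta_f(t)$ of the Milnor monodromy of $f$ as the Saito dual of a correction of the Poincar\'e series $p_f(t)$ of the graded coordinate ring $R_f$, the correction being governed by the Seifert data of the link of $X$, i.e.\ by the orders of the isotropy groups of the canonical $\CC^\ast$-action on $X\setminus\{0\}$ together with the genus of the quotient curve. Specialised to a quotient curve of genus $0$ carrying orbit invariants $\alpha'_1,\alpha'_2,\alpha'_3$ (padded by $\alpha'_i=1$ if there are fewer than three isotropic orbits), it asserts that the rational function $(1-t)^2\,p_f(t)\prod_{i=1}^3\frac{1-t^{\alpha'_i}}{1-t}$ is a polynomial whose Saito dual equals $\Delta_f(t)$. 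Granting this, it remains to (1) check the hypotheses of \cite[Theorem~1]{E02} for our invertible $f$ with reduced canonical weights $W_f$, (2) compute the genus and orbit invariants of the link, and (3) identify the resulting expression with $\phi_{G_f}(t)$.

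For step (2): since $W_f$ is reduced we have $c_f=\gcd(w_1,w_2,w_3,d)=1$, and inspection of Table~\ref{weights} shows that in every type this already implies $\gcd(w_1,w_2,w_3)=1$; hence the canonical $\CC^\ast$-action on $\CC^3$ is faithful and any point of $X\setminus\{0\}$ with all coordinates nonzero has trivial stabiliser. So every isotropic orbit lies on $\{xyz=0\}\cap X$, and a case-by-case analysis based on Table~\ref{TabAppendix1} — the same kind of computation as in the proof of Lemma~\ref{lem:isotropy}, but now for the $\CC^\ast$-action on the surface $X$ rather than the $G_f$-action on $\CC^3$ — shows that there are at most three such orbits, with isotropy orders the Dolgachev numbers $\alpha_1,\alpha_2,\alpha_3$ of Table~\ref{TabAppendix2}, while the genus of the base curve is $0$ by the argument of Lemma~\ref{lem:genus}. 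Equivalently: reducedness of $W_f$ forces $G_f$ to be connected, hence equal to the canonical $\CC^\ast$, so the Seifert base orbifold of the link is exactly the stack $\mathcal{C}_{G_f}$ of Theorem~\ref{thm:orbifold}, whose orbit data are $A_{G_f}$ by definition. Therefore in \cite[Theorem~1]{E02} one has genus $0$ and $\alpha'_i=\alpha_i$.

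Substituting into the formula above shows that $(1-t)^2\,p_f(t)\prod_{i=1}^3\frac{1-t^{\alpha_i}}{1-t}$ — which is precisely the characteristic function $\phi_{G_f}(t)$, cf.\ Table~\ref{phi} — is a polynomial with Saito dual $\Delta_f(t)$, giving both claims of the theorem. As an independent check one can read from Table~\ref{phi} that each $\phi_{G_f}(t)$ is a polynomial: writing $\phi_{G_f}(t)=\frac{(1-t^d)\prod_i(1-t^{\alpha_i})}{(1-t)\prod_j(1-t^{w_j})}$, this amounts to the denominator dividing the numerator in $\CC[t]$, which holds precisely because reducedness of $W_f$ provides the necessary coprimality among the $w_j$, the $\alpha_i$ and $d$.

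The main obstacle is step (2), and more precisely the identification of the link's orbit invariants with the Dolgachev numbers. The $\alpha'_i$ of \cite[Theorem~1]{E02} live on the $\CC^\ast$-quotient of the hypersurface $\{f=0\}$, whereas $A_{G_f}$ lives on the quotient stack $\mathcal{C}_{G_f}$ formed from the full abelian symmetry group $G_f$; these orbifold curves disagree for a general invertible polynomial and coincide only after the reduced hypothesis $c_f=1$ has eliminated the finite part of $G_f$. Making this comparison precise, together with matching the exact $(1-t)$-normalisation used in \cite[Theorem~1]{E02} with the definition of $\phi_{G_f}$, is where attention is needed; the rest is bookkeeping with Tables~\ref{weights}, \ref{TabAppendix2} and \ref{phi}. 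One should also confirm that no auxiliary hypothesis of \cite[Theorem~1]{E02} (for instance on the sign of the Gorenstein parameter $a_{W_f}$) is violated here — the value $a_{W_f}=0$ does not occur for invertible polynomials with reduced $W_f$, and the remaining cases fall within the scope of that theorem.
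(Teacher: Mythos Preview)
Your proposal is correct and takes essentially the same approach as the paper: both invoke Theorem~1 of \cite{E02} as the sole input, and the paper in fact gives no further argument (the statement is preceded by ``by Theorem~1 in \cite{E02}, we have the following'' and closes with a bare \qed). You have supplied the detail the paper leaves implicit, namely the identification of the Seifert orbit data appearing in \cite{E02} with the Dolgachev numbers $A_{G_f}$; this rests on the fact that for reduced $W_f$ one has $G_f\cong\CC^\ast$, which the paper itself records as a remark in Section~6.
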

Here we use a normalized version of the characteristic polynomial.
If $\tau$ denotes the Milnor monodromy of the singularity $f$, 
then its characteristic polynomial is 
\[
\Phi_f(t):=\det (1-\tau^{-1}t).
\]
Even if $f(x,y,z)$ is an invertible polynomial whose canonical system of weights
$(w_1,w_2,w_3;d)$ is reduced, 
the canonical system of weights of its transpose $f^t$ may not be reduced.
We have the following property of our characteristic functions:
\begin{theorem}\label{Saitoduality}
Let $f(x,y,z)$ be an invertible polynomial. Then we have the Saito duality
\[
\phi_{G_f}^*(t)=\phi_{G_{f^t}}(t).
\]
In particular, if the canonical system of weights attached to $f$ is reduced,
then the polynomial $\phi_{G_f}(t)$ is the characteristic polynomial of an operator $\tau$ such that $\tau^{c_{f^t}}$ is the monodromy of the singularity $f^t$.
\end{theorem}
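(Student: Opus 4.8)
The plan is to prove the Saito duality $\phi_{G_f}^*(t)=\phi_{G_{f^t}}(t)$ by an explicit case-by-case verification using the five types in Table \ref{TabAppendix1}, since all the relevant data --- the canonical systems of weights $W_f$, $W_{f^t}$ (Tables \ref{weights} and \ref{dualweights}), the Dolgachev numbers $A_{G_f}$, $A_{G_{f^t}}$ (Table \ref{TabAppendix2}), and hence the characteristic functions $\phi_{G_f}(t)$, $\phi_{G_{f^t}}(t)$ (Table \ref{phi}) --- are already tabulated. First I would fix, for each type, a common integer $d$ divisible by all the exponents appearing as $t^i$ in $\phi_{G_f}(t)$ and in $\phi_{G_{f^t}}(t)$ (for instance $d = p_1p_2p_3$ in Type I, $d=p_1p_3$ in Type II, $d=p_1p_2$ in Type III, $d=p_3$ in Type IV, $d=q_1q_2q_3+1$ in Type V), write $\phi_{G_f}(t)=\prod_{i\mid d}(1-t^i)^{e(i)}$, and then check directly that $\prod_{i\mid d}(1-t^{d/i})^{-e(i)}$ equals the entry $\phi_{G_{f^t}}(t)$ read off from Table \ref{phi} with $f$ replaced by $f^t$. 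Concretely this amounts to verifying that the map $i\mapsto d/i$ sends the numerator exponents of $\phi_{G_f}$ to the denominator exponents of $\phi_{G_{f^t}}$ and vice versa; e.g.\ in Type I one checks $\{p_1,p_2,p_3,p_1p_2p_3\}$ and $\{1,p_2p_3,p_3p_1,p_1p_2\}$ are swapped by $i\mapsto p_1p_2p_3/i$, which is immediate, and the remaining four types are analogous short arithmetic identities involving the relations among the $p_i$, $q_i$ recorded in the tables (such as $p_2+1=(q_2+1)(q_3+1)$ in Type III).

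Having established the functional identity $\phi_{G_f}^*(t)=\phi_{G_{f^t}}(t)$, I would then derive the second assertion as follows. Suppose the canonical system of weights $W_f=(w_1,w_2,w_3;d)$ attached to $f$ is reduced. Then by Theorem \ref{Monodromy} the characteristic function $\phi_{G_f}(t)$ is a polynomial, and $\phi_{G_f}^*(t)$ is the characteristic polynomial $\Phi_f(t)=\det(1-\tau^{-1}t)$ of the Milnor monodromy $\tau$ of $f$. Dually, applying Theorem \ref{Monodromy} is not legitimate for $f^t$ since $W_{f^t}$ need not be reduced; instead I would relate $\phi_{G_{f^t}}(t)$ to the monodromy of $f^t$ through the constant $c_{f^t}=\gcd(w_1',w_2',w_3',d')$ of the canonical weights of $f^t$. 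The point is that dividing $W_{f^t}$ by $c_{f^t}$ yields a reduced system, and passing from a weighted homogeneous polynomial to the same polynomial with a rescaled (reduced) grading replaces the monodromy $\tau'$ by $(\tau')^{1/c_{f^t}}$-type data, i.e.\ the characteristic polynomial computed with the non-reduced weights $W_{f^t}$ is the characteristic polynomial of an operator whose $c_{f^t}$-th power is the genuine Milnor monodromy of $f^t$. Since by the first part $\phi_{G_f}(t)=\phi_{G_{f^t}}^*(t)=(\Phi_{f^t}^{\text{non-reduced}})(t)$ up to the normalization in Theorem \ref{Monodromy}, this gives exactly the claimed statement: $\phi_{G_f}(t)$ is the characteristic polynomial of an operator $\tau$ with $\tau^{c_{f^t}}$ equal to the monodromy of $f^t$.

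The main obstacle I anticipate is the second part rather than the first: the case-by-case check of $\phi_{G_f}^*(t)=\phi_{G_{f^t}}(t)$ is routine bookkeeping, but making precise the relation between the characteristic function attached to a \emph{non-reduced} canonical system of weights and the actual Milnor monodromy requires care. Specifically, one must track how the formula in Theorem \ref{Monodromy} (which is stated for reduced weights) transforms under scaling the weights by $c_{f^t}$, and check that the Saito-dual of $\phi_{G_{f^t}}(t)$ --- computed with the non-reduced $d$ appearing in $W_{f^t}$ --- is compatible with the Saito-dual computed with the reduced $d/c_{f^t}$. This is essentially a statement about how the eigenvalues of the monodromy (roots of unity) behave when one replaces $t$ by $t$ in a coarser cyclotomic factorization, i.e.\ the substitution $i \mapsto d/i$ versus $i \mapsto (d/c_{f^t})/(i/c_{f^t})$; it should come down to the observation that $\phi_{G_f}(t)$ has all its zeros at roots of unity of order dividing $d/c_{f^t}\cdot c_{f^t}$ and that the $c_{f^t}$-th power operation on the monodromy corresponds precisely to this index rescaling. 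I would handle this uniformly, citing \cite{s:2} and \cite{E02} for the precise relationship between Poincaré series, canonical weights, and monodromy, and then note that in the five types the value of $c_{f^t}$ can itself be read off from Tables \ref{weights} and \ref{dualweights} to confirm consistency.
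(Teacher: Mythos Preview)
Your approach to the first assertion --- a case-by-case verification of $\phi_{G_f}^*(t)=\phi_{G_{f^t}}(t)$ using Table~\ref{phi} and the map $i\mapsto d/i$ --- is exactly what the paper does (``one can easily check the first statement by direct calculations''). This part is fine.

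The second assertion is where your proposal and the paper diverge, and where your argument has a genuine gap. You want to argue conceptually that rescaling the weight system of $f^t$ by $c_{f^t}$ ``replaces the monodromy $\tau'$ by $(\tau')^{1/c_{f^t}}$-type data'', and then invoke an extension of Theorem~\ref{Monodromy} to non-reduced weights by citing \cite{s:2} and \cite{E02}. But the Milnor monodromy of $f^t$ is a geometric object determined by the polynomial, not by a choice of grading; rescaling the weights does not produce a new operator, and Theorem~\ref{Monodromy} (which is \cite{E02}) is stated only for reduced weights. There is no ready-made statement in those references that hands you an operator $\tau$ with $\tau^{c_{f^t}}$ equal to the monodromy and $\det(1-\tau^{-1}t)=\phi_{G_f}(t)$. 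You would still have to \emph{construct} $\tau$, and your sketch does not do this.

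The paper avoids this difficulty by a second explicit computation. It computes the characteristic polynomial $\Phi_{f^t}(t)$ of the genuine Milnor monodromy of $f^t$ directly, using Varchenko's method \cite{V} (which is insensitive to whether the weights are reduced), and tabulates the result for each type (Table~\ref{phimon}). Then it invokes the elementary identity
\[
\det(1-\tau^{-c}t^c)\;=\;\prod_{i=0}^{c-1}\det(1-\tau^{-1}\zeta^i t),\qquad \zeta=e^{2\pi i/c},
\]
and checks, type by type in the cases II, IV, V where $c_{f^t}>1$ can occur, that $\phi_{G_f}(t)$ and $\Phi_{f^t}(t)$ are related by this identity. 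This simultaneously establishes that a suitable $\tau$ exists (namely any operator with the cyclotomic factors prescribed by $\phi_{G_f}$) and that its $c_{f^t}$-th power has the correct characteristic polynomial $\Phi_{f^t}$. So the missing ingredient in your plan is precisely an independent computation of $\Phi_{f^t}(t)$; once you have that, the algebraic identity above replaces the vague ``weight rescaling'' heuristic.
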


\begin{proof}
One can easily check the first statement by direct calculations. 

The characteristic polynomials of the monodromy of $f^t$ can be computed using Varchenko's method \cite{V}. They are listed in Table~\ref{phimon}.
In order to prove the second statement, assume that the canonical system of weights attached to $f$ is reduced. If the canonical system of weights attached to $f^t$ is also reduced then the second statement follows from Theorem~\ref{Monodromy}. Otherwise,
let $c=c_{f^t}$.  The case that $c > 1$ can only occur for type II (with $c_1=c$ and $c_2=1$), IV, or V.  Let $\zeta:= e^{\frac{2 \pi i}{c}}$ be a primitive $c$-th root of unity. There is the following relation between the characteristic polynomial of an operator $\tau$ and of the operator $\tau^c$:
\[ \det(1-\tau^{-c}t^c) = \prod_{i=0}^{c-1} \det(1-\tau^{-1}\zeta^i t). \]
Using this relation and Table~\ref{phimon}, one can easily show the second statement for the remaining cases. 
\begin{table}[h]
\begin{center}
\begin{tabular}{|c||c|}
\hline
Type & $\Phi_{f^t}(t)$ \\
\hline
\hline
I & $p_1\cdot p_2\cdot p_3\cdot \left(\frac{p_1p_2p_3}{c}\right)^c\left/1\cdot \left(\frac{p_2p_3}{c_1}\right)^{c_1} \cdot \left(\frac{p_3p_1}{c_2} \right)^{c_2} \cdot \left( \frac{p_1p_2}{c_3} \right)^{c_3}\right. $\\
 & $c_1 = {\rm gcd}(p_2, p_3), c_2 = {\rm gcd}(p_1, p_3), c_3 = {\rm gcd}(p_1, p_2)$ \\
\hline
II & $p_1\cdot \frac{p_3}{p_2}\cdot \left( \frac{p_1p_3}{c} \right)^c \left/1\cdot \left( \frac{p_3}{c_1} \right)^{c_1} \cdot \left( \frac{p_1p_3}{p_2c_2} \right)^{c_2} \right. $ \\
 & $c_1 = {\rm gcd}( p_2, \frac{p_3}{p_2}-1), c_2 = {\rm gcd}(p_1, \frac{p_3}{p_2})$ \\
\hline
III & $p_1\cdot \left(\frac{p_1p_2}{c} \right)^c \left/1\cdot \left(\frac{p_2}{c_1} \right)^{c_1} \right. $\\
 & $c_1 = {\rm gcd}( q_2,q_3)$ \\
\hline
IV & $\frac{p_3}{p_2}\cdot \left(\frac{p_3}{c} \right)^c \left/1\cdot \left(\frac{p_3}{p_1c_1} \right)^{c_1}\right. $\\
 & $c_1 = {\rm gcd}(\frac{p_2}{p_1}, \frac{p_3}{p_2}-1)$ \\
\hline
V & $\left(\frac{q_1q_2q_3+1}{c} \right)^c \left/1\right. $\\
\hline
\end{tabular}
\end{center}
\caption{Characteristic polynomial of the monodromy of $f^t$}\label{phimon}
\end{table}

\end{proof}
\begin{remark}
Theorem \ref{Saitoduality} is already shown in \cite{t:3} for the 
special case when both the canonical systems of weights for $f$ and $f^t$ are reduced.
\end{remark}
\section{Examples, Coxeter--Dynkin diagrams}

We now show how the weighted homogeneous singularities of Arnold's classification of singularities fit into our scheme.

\begin{definition} Let $f(x,y,x)$ be an invertible polynomial whose canonical system 
of weights is reduced, $\alpha_1, \alpha_2, \alpha_3$ be the Dolgachev numbers of $f$, and $a_{W_f}$ be the Gorenstein parameter of $W_f$. 
We define positive integers $\beta_i$, $0 < \beta_i < \alpha_i$, by 
\[ \beta_i a_{W_f} \equiv 1 \, {\rm mod}\, \alpha_i, \quad i=1,2,3. \]
The numbers $(\alpha_1, \beta_1), (\alpha_2, \beta_2), (\alpha_3, \beta_3)$ are called the {\em orbit invariants} of $f$.
\end{definition}

\begin{remark}
Since the weight system is assumed to be reduced, $G_f \cong \CC^\ast$ and we have the usual $\CC^\ast$-action. The numbers $(\alpha_1, \beta_1), (\alpha_2, \beta_2), (\alpha_3, \beta_3)$ are just the usual orbit invariants of the $\CC^\ast$-action (see \cite{Dolgachev83}). 
\end{remark}

We now consider the classification of the singularities 
defined by invertible polynomials according to the Gorenstein parameter $a_{W_f}$.

The invertible polynomials $f(x,y,z)$ with $a_{W_f} < 0$ define the simple singularities. These invertible polynomials together with the corresponding Dolgachev and Gabrielov numbers are given in Table~\ref{TabADE}. They are all self-dual.

\begin{table}[h]
\begin{center}
\begin{tabular}{|c|c|c||c|c|c|}  \hline
Name & $\alpha_1,\alpha_2, \alpha_3$  & $f$ & $f^t$ & $\gamma_1, \gamma_2, \gamma_3$ & Dual
\\ \hline
$A_{kl}$ & $1,k,kl-k+1$ & $xy+y^kz+z^lx$ & $zx+xy^k+yz^l$ & $1,k,kl-k+1$ & $A_{kl}$ \\
$D_{2k}$ & $2,2,2k-2$ & $x^2+zy^2+yz^k$ & $x^2+zy^2+yz^k$ & $2,2,2k-2$ & $D_{2k}$ \\
$D_{2k+1}$ & $2,2,2k-1$ & $x^2+xy^k+yz^2$ & $x^2y+y^kz+z^2$ & $2,2,2k-1$ & $D_{2k+1}$ \\
$E_6$ & $2,3,3$ & $x^3+y^2+yz^2$ & $x^3+y^2z+z^2$ & $2,3,3$ & $E_6$ \\
$E_7$ & $2,3,4$ & $x^2+y^3+yz^3$ & $x^2+y^3z+z^3$ & $2,3,4$ & $E_7$ \\
$E_8$ & $2,3,5$ & $x^2+y^3+z^5$ & $x^2+y^3+z^5$ & $2,3,5$ & $E_8$ \\
\hline
\end{tabular}
\end{center}
\caption{Simple singularities}\label{TabADE}
\end{table}

The invertible polynomials $f(x,y,z)$ with $a_{W_f} =0$ define the simply elliptic singularities. They are exhibited in
Table~\ref{TabEll}. The corresponding weight systems are not reduced. Again all polynomials are self-dual.

\begin{table}[h]
\begin{center}
\begin{tabular}{|c|c|c||c|c|c|}  \hline
Name & $\alpha_1,\alpha_2, \alpha_3$  & $f$ & $f^t$ & $\gamma_1, \gamma_2, \gamma_3$ & Dual
\\ \hline
$\widetilde{E}_6$ & $3,3,3$ & $x^3+y^3+z^3$ & $x^3+y^3+z^3$ & $3,3,3$ & $\widetilde{E}_6$ \\
$\widetilde{E}_7$ & $2,4,4$ & $x^2+y^4+z^4$ & $x^2+y^4+z^4$ & $2,4,4$ & $\widetilde{E}_7$ \\
$\widetilde{E}_8$ & $2,3,6$ & $x^2+y^3+z^6$ & $x^2+y^3+z^6$ & $2,3,6$ & $\widetilde{E}_8$ \\
\hline
\end{tabular}
\end{center}
\caption{Simply elliptic singularities}\label{TabEll}
\end{table}

Now we consider invertible polynomials $f(x,y,z)$ with $a_{W_f} >0$. 
In Table~\ref{TabArnold} we indicate the invertible polynomials for the exceptional unimodal singularities. We obtain Arnold's strange duality. Here the weight systems are all reduced and we have $a_{W_f}=1$. We obtain a Coxeter-Dynkin diagram for $f$ by adding one new vertex to the graph $T(\gamma_1,\gamma_2,\gamma_3)$ (see Figure \ref{FigTpqr}) and connecting it to the upper central vertex 
(with index $\gamma_1+\gamma_2+\gamma_3-1$) by a solid edge. Therefore our Gabrielov numbers coincide with the numbers defined by Gabrielov in this case.

\begin{table}[h]
\begin{center}
\begin{tabular}{|c|c|c||c|c|c|}  \hline
Name & $\alpha_1,\alpha_2, \alpha_3$  & $f$ & $f^t$ & $\gamma_1, \gamma_2, \gamma_3$ & Dual
\\ \hline
$E_{12}$ & $2,3,7$ & $x^2+y^3+z^7$ & $x^2+y^3+z^7$ & $2,3,7$ & $E_{12}$ \\ 
$E_{13}$ & $2,4,5$ & $x^2+y^3+yz^5$ & $x^2+zy^3+z^5$ & $2,3,8$ & $Z_{11}$ \\ 
$E_{14}$ & $3,3,4$  & $x^3+y^2+yz^4$ &  $x^3+zy^2+z^4$ & $2,3,9$ & $Q_{10}$ \\ 
$Z_{12}$ & $2,4,6$ & $x^2+zy^3+yz^4$ & $x^2+zy^3+yz^4$ & $2,4,6$ & $Z_{12}$ \\
$Z_{13}$ & $3,3,5$ & $x^2+xy^3+yz^3$ & $x^2y+y^3z+z^3$ & $2,4,7$ & $Q_{11}$ \\ 
$Q_{12}$ & $3,3,6$ & $x^3+zy^2+yz^3$ & $x^3+zy^2+yz^3$ & $3,3,6$ &  $Q_{12}$ \\ 
$W_{12}$ & $2,5,5$ & $x^5 + y^2+yz^2$ & $x^5+y^2z+z^2$ & $2,5,5$ & $W_{12}$ \\ 
$W_{13}$ & $3,4,4$ & $x^2+xy^2+yz^4$ & $x^2y+y^2z+z^4$ & $2,5,6$ & $S_{11}$ \\
$S_{12}$ & $3,4,5$ & $x^3y+y^2z+z^2x$ & $zx^3+xy^2+yz^2$ & $3,4,5$ &  $S_{12}$ \\ 
$U_{12}$ & $4,4,4$ &  $x^4+zy^2+yz^2$ & $x^4+zy^2+yz^2$ & $4,4,4$ & $U_{12}$ \\ 
\hline
\end{tabular}
\end{center}
\caption{Arnold's strange duality}\label{TabArnold}
\end{table}

Now we consider the exceptional bimodal singularities (Table~\ref{TabExBi}). They can all be given by invertible polynomials with a reduced weight system. We see that we also obtain a strange duality involving the exceptional bimodal singularities and some other singularities which are given by invertible polynomials with in general non-reduced weight systems (see Table~\ref{TabDynkin}).

\begin{table}[h]
\begin{tabular}{|c|c|c||c|c|c|}
\hline
Name & $\alpha_1,\alpha_2, \alpha_3$  & $f$   & $f^t$ & $\gamma_1, \gamma_2, \gamma_3$ & Dual \\
\hline
$E_{18}$ & $3, 3, 5$ & $x^3 +y^2+yz^5$  & $x^3 +y^2z+z^5$ & $2,3,12$ & $Q_{12}$ \\
$E_{19}$ & $2, 4, 7$ & $x^2+y^3+yz^7$  & $x^2+y^3z+z^7$  & $2,3,12$ & $Z_{1,0}$  \\
$E_{20}$ & $2, 3, 11$ & $x^2 +y^3 +z^{11}$ & $x^2 +y^3 +z^{11}$ & $2, 3, 11$ & $E_{20}$ \\
\hline
$Z_{17}$ & $3, 3, 7$ & $x^2+xy^4 + yz^3$ & $x^2y+y^4z + z^3$ & $2,4,10$ & $Q_{2,0}$  \\
$Z_{18}$ & $2, 4, 10$ & $x^2 + zy^3 +yz^6$ & $x^2 + zy^3 +yz^6$ & $2, 4, 10$ & $Z_{18}$  \\
$Z_{19}$ & $2, 3, 16$ & $x^2+y^9+yz^3$  & $x^2+y^9z+z^3$ & $2, 4, 9$ & $E_{25}$\\
\hline
$Q_{16}$ & $3, 3, 9$ & $x^3 +  zy^2+yz^4$ & $x^3 +  zy^2+yz^4$ & $3, 3, 9$ & $Q_{16}$ \\
$Q_{17}$ & $2, 4, 13$ & $x^3+ xy^5+ yz^2$ & $x^3y+ y^5z+ z^2$  & $3,3,9$  & $Z_{2,0}$  \\
$Q_{18}$ & $2, 3, 21$ & $x^3+y^8+yz^2$ & $x^3+y^8z+z^2$ & $3, 3, 8$ & $E_{30}$  \\
\hline
$W_{17}$ & $3, 5, 5$ & $x^2+xy^2+yz^5$ & $x^2y+y^2z+z^5$ & $2,6,8$ & $S_{1,0}$  \\
$W_{18}$ & $2, 7, 7$ & $x^7+ y^2+ yz^2$ & $x^7+ y^2z+ z^2$ & $2, 7, 7$ & $W_{18}$  \\
\hline
$S_{16}$ & $3, 5, 7$ & $x^4y+y^2z+z^2x$ & $zx^4+xy^2+yz^2$ & $3, 5, 7$ & $S_{16}$  \\
$S_{17}$ & $2, 7, 10$ & $x^6+xy^2+yz^2$ & $x^6y+y^2z+z^2$ & $3,6,6$ & $X_{2,0}$  \\
\hline
$U_{16}$ & $5, 5, 5$ & $x^5+zy^2+yz^2$ & $x^5+zy^2+yz^2$ & $5, 5, 5$ & $U_{16}$  \\
\hline
\end{tabular}
\caption{Strange duality of the exceptional bimodal singularities} \label{TabExBi}
\end{table}

We list the invariants $(\alpha_1, \beta_1), (\alpha_2, \beta_2), (\alpha_3, \beta_3)$ in Table~\ref{TabDynkin}.

We now indicate Coxeter-Dynkin diagrams for the bimodal exceptional singularities. Coxeter-Dynkin diagrams for these singularities were obtained in \cite{E2}. Let $f$ be an invertible polynomial defining an exceptional bimodal singularity with orbits invariants $(\alpha_1, \beta_1), (\alpha_2, \beta_2), (\alpha_3, \beta_3)$ and Gabrielov numbers $\gamma_1,\gamma_2,\gamma_3$. We define numbers $\delta_1, \delta_2, \delta_3$ by Table~\ref{TabDynkin}. One can show that there exists a Coxeter-Dynkin diagram which is obtained by an extension of a $T(\gamma_1,\gamma_2,\gamma_3)$-diagram by $a_{W_f}$ vertices in  the following way:
\begin{itemize}
\item If $a_{W_f}=2$ then  the diagram $T(\gamma_1,\gamma_2,\gamma_3)$ is extended by $\bullet_1 \mbox{---} \bullet_2$ where $\bullet_1$ is connected to the upper central vertex and $\bullet_2$ to the $\gamma_i - \delta_i-1$-th vertex from the outside of the $i$-th arm, unless $\delta_i=\gamma_i - 1$ ($i=1,2,3$). 
\item If $a_{W_f}=3$ then  the diagram $T(\gamma_1,\gamma_2,\gamma_3)$ is extended by $\bullet_1 \mbox{---} \bullet_2 \mbox{---} \bullet_3$ where $\bullet_1$ is connected to the upper central vertex and $\bullet_3$ to the $\gamma_i - \delta_i-1$-th vertex from the outside of the $i$-th arm, unless $\delta_i=\gamma_i - 1$ ($i=1,2,3$). 
\item If $a_{W_f}=5$ then  the diagram $T(\gamma_1,\gamma_2,\gamma_3)$
 is extended by $\bullet_1 \mbox{---} \bullet_2 \mbox{---} \bullet_3\mbox{---} \bullet_4\mbox{---} \bullet_5$ where $\bullet_1$ is connected to the upper central vertex and $\bullet_3$ to the $\gamma_i - \delta_i-1$-th vertex from the outside of the $i$-th arm, unless $\delta_i=\gamma_i - 1$ ($i=1,2,3$). 
\end{itemize}

\begin{table}[h]
\begin{tabular}{|c|c|c|c|c|c|c|}
\hline
Name &  $(\alpha_i, \beta_i), i=1,2,3$ & $a_{W_f}$ & $(\gamma_i, \delta_i), i=1,2,3$   & $c_{f^t}$ &  $\mu_{f^t}$  & Dual \\
\hline
$E_{18}$  & $(3,2) , (3,2),  (5,3)$ & 2  & $(2,1), (3,2), (12,8)$ &  2 & 12 & $Q_{12}$\\
$E_{19}$ & $(2,1), (4,3), (7,5)$ & 3  & $(2,1), (3,2), (12,9)$ & 3 & 15 & $Z_{1,0}$\\
$E_{20}$  & $(2,1), (3,2), (11,9)$ & 5  & $(2,1), (3,2), (11,9)$ & 1 & 20 & $E_{20}$ \\
$Z_{17}$ & $(3,2), (3,2), (7,4)$ & 2  & $(2,1), (4,3), (10,6)$ & 2 & 14 & $Q_{2,0}$ \\
$Z_{18}$  & $(2,1), (4,3), (10,7)$ & 3  &  $(2,1), (4,3), (10,7)$ & 1 & 18 & $Z_{18}$  \\
$Z_{19}$ & $(2,1), (3,2), (16,13)$ & 5  &  $(2,1), (4,3), (9,7)$ & 1 & 25 &  $E_{25}$\\
$Q_{16}$ & $(3,2), (3,2), (9,5)$ & 2  & $(3,2), (3,2), (9,5)$ & 1 &  16 & $Q_{16}$\\
$Q_{17}$ & $(2,1), (4,3), (13,9)$ & 3  &  $(3,2), (3,2), (9,6)$  & 3 & 21 &  $Z_{2,0}$ \\
$Q_{18}$ & $(2,1), (3,2), (21,17)$ & 5  & $(3,2), (3,2), (8,6)$ & 1 & 30 &  $E_{30}$ \\
$W_{17}$ & $(3,2), (5,3), (5,3)$ & 2 & $(2,1), (6,4), (8,5)$ & 2 & 14 & $S_{1,0}$\\
$W_{18}$ & $(2,1), (7,5), (7,5)$ & 3  &  $(2,1), (7,5), (7,5)$  & 1 &  18 & $W_{18}$\\
$S_{16}$ & $(3,2), (5,3), (7,4)$ & 2  & $(3,2), (5,3), (7,4)$ & 1 & 16 &  $S_{16}$ \\
$S_{17}$ & $(2,1), (7,5), (10,7)$ & 3  & $(3,2),(6,4), (6,4)$ & 3 & 21 &  $X_{2,0}$\\
$U_{16}$  & $(5,3), (5,3), (5,3)$ & 2  &  $(5,3), (5,3), (5,3)$ & 1 & 16 & $U_{16}$ \\
\hline
\end{tabular}
\caption{Invariants of the singularities involved} \label{TabDynkin}
\end{table}

We now consider the characteristic functions $\phi_{G_f}(t)$ and $\phi_{G_{f^t}}(t)$ in this case. They are listed in Table~\ref{TabCharBiEx}. By Theorem~\ref{Monodromy} and Theorem~\ref{Saitoduality} the characteristic function $\phi_{G_{f^t}}(t)$  is the characteristic polynomial of 
the Milnor monodromy of the singularity $f$. Moreover, the characteristic function $\phi_{G_f}$ is the characteristic polynomial of an operator $\tau$ such that $\tau^{c_{f^t}}$ is the monodromy of the singularity $f^t$. 
\begin{table}[h]
\begin{center}
\begin{tabular}{|c|c||c|c|}
\hline
Name &  $\phi_{G_f}$ & $\phi_{G_{f^t}}$  & Dual \\
\hline
$E_{18}$  & $3\cdot 5 \cdot 30/1 \cdot 10 \cdot 15$  & $2\cdot 3 \cdot 30/1 \cdot 10 \cdot 6$  & $Q_{12}$ \\
$E_{19}$  & $2 \cdot 7 \cdot 42/1 \cdot 14\cdot 21$ & $2 \cdot 3 \cdot 42/1 \cdot 6 \cdot 21$  & $Z_{1,0}$  \\
$E_{20}$  & $2 \cdot 3 \cdot 11 \cdot 66/1 \cdot 6 \cdot 22 \cdot 33$ & $2 \cdot 3 \cdot 11 \cdot 66/1 \cdot 6 \cdot 22 \cdot 33$  & $E_{20}$ \\
\hline
$Z_{17}$  & $3 \cdot 24/1 \cdot 12$ & $2 \cdot 24/1 \cdot 8$ & $Q_{2,0}$  \\
$Z_{18}$  & $2 \cdot 34/1 \cdot 17$  & $2 \cdot 34/1 \cdot 17$  & $Z_{18}$  \\
$Z_{19}$  & $2 \cdot 3 \cdot 54/1 \cdot 6 \cdot 27$ & $2 \cdot 3 \cdot  54/1 \cdot 6 \cdot 27$ & $E_{25}$\\
\hline
$Q_{16}$  & $3 \cdot 21/1 \cdot 7$ & $3 \cdot 21/1 \cdot 7$  & $Q_{16}$ \\
$Q_{17}$ & $2 \cdot 30/1 \cdot 10$ & $3 \cdot 30/1 \cdot 15$ & $Z_{2,0}$  \\
$Q_{18}$ & $2 \cdot 3 \cdot 48/1 \cdot 6 \cdot 16$ & $2 \cdot 3 \cdot 48/1 \cdot 6 \cdot 16$ & $E_{30}$\\
\hline
$W_{17}$ & $5 \cdot 20/1 \cdot 10$ & $2 \cdot 20/1 \cdot 4$ & $S_{1,0}$  \\
$W_{18}$  & $2 \cdot 7 \cdot 28/1 \cdot 4 \cdot 14$ & $2 \cdot 7 \cdot 28/1 \cdot 4 \cdot 14$ & $W_{18}$\\
\hline
$S_{16}$ & $17/1$ & $17/1$  & $S_{16}$  \\
$S_{17}$  & $2 \cdot 24/1 \cdot 4$ & $6 \cdot 24/1 \cdot 12$  & $X_{2,0}$  \\
\hline
$U_{16}$  & $5 \cdot 15/1 \cdot 3$ & $5 \cdot 15/1 \cdot 3$  & $U_{16}$  \\
\hline
\end{tabular}
\end{center}
\caption{Polynomials $\phi_{G_f}$ and $\phi_{G_{f^t}}$} \label{TabCharBiEx}
\end{table}

In Table~\ref{TabExBi}, pairs $(f,f^t)$ with $c_f=1$, $c_{f^t}>1$ only occur for Types II and IV. 
As already noted in the proof of Theorem~\ref{Saitoduality}, such pairs can only exist for Types II, IV, or V.  
Here is an example of such a pair for Type V.

\begin{example} Let $f(x,y,z)=x^2y+y^3z+z^4x$ with canonical system of weights $W_f=(9,7,4;25)$. Then $f^t(x,y,z)=zx^2+xy^3+yz^4$, $W_{f^t}=(10,5,5;25)$. Here $c_f=1$ but $c_{f^t}=5$.
\end{example}

In Table~\ref{TabExBi}, three of the 6 weighted homogeneous singularities of the bimodal series appear. 
For completeness we also indicate invertible polynomials for the remaining three in Table~\ref{TabBi}. Here both canonical systems of weights are non-reduced. In the case $J_{3,0}$, $\phi_{G_f}(t)$ is a polynomial, but not the characteristic polynomial of an operator $\tau$ such that $\tau^{c_{f^t}}$ is the monodromy of $Z_{13}$. In this case, $\phi_{G_{f^t}}(t)$ is not a polynomial. In the remaining cases, $\phi_{G_f}(t)=\phi^\ast_{G_f}(t)$ is not a polynomial.

\begin{table}[h]
\begin{center}
\begin{tabular}{|c|c|c||c|c|c|}  \hline
Name & $\alpha_1,\alpha_2, \alpha_3$  & $f$ & $f^t$ & $\gamma_1, \gamma_2, \gamma_3$ & Dual
\\ \hline
$J_{3,0}$ & $2,4,6$ & $x^2+y^3+yz^6$ & $x^2+y^3z+z^6$ & $2,3,10$ & $Z_{13}$ \\
$W_{1,0}$ & $2,6,6$ & $x^6+y^2+yz^2$ & $x^6+y^2z+z^2$ & $2,6,6$ & $W_{1,0}$ \\
$U_{1,0}$ & $3,3,6$ & $x^3+y^3+yz^3$ & $x^3+y^3z+z^3$ & $3,3,6$ & $U_{1,0}$ \\
\hline
\end{tabular}
\end{center}
\caption{Bimodal  series}\label{TabBi}
\end{table}


\end{document}